\newtheorem{theorem}{Theorem}[section]
\newtheorem{algorithm}[theorem]{Algorithm}
\newtheorem{corollary}[theorem]{Corollary}
\newtheorem{definition}[theorem]{Definition}
\newtheorem{example}[theorem]{Example}
\newtheorem{lemma}[theorem]{Lemma}
\newtheorem{proposition}[theorem]{Proposition}
\newtheorem{remark}[theorem]{Remark}
\newtheorem{Main Result:}{Main Result:}
\newenvironment{proof}[1][Proof]{\textbf{#1.} }{\ \rule{0.5em}{0.5em}} %
\begin{document}

\title{{Finding normal bases over finite fields with prescribed trace self-orthogonal relations}
%\thanks{This work was supported by NSF of China( No.
%60803154,)}
\author{Xiyong Zhang\footnote{\textbf{Corresponding E-mail Address:} xiyong.zhang@hotmail.com}, \ \
Rongquan Feng,\ \ Qunying Liao, \ \ Xuhong Gao  \\
%{\small 1. Zhengzhou Information Science and Technology Institute, Zhengzhou 450002, PRC} \\
%{\small 2. School of Mathematical Sciences, Peking University,
%Beijing 100871, PRC}\\
%{\small 3. Institute of Mathematics and Software Science, Sichuan Normal University, Chengdu 610066, PRC} %%School
}
%%%of Mathematical and Software Scientifical College,
\date{}
}
\maketitle

\vspace{-0.4cm}

\begin{abstract}
Normal bases and self-dual normal bases over finite fields have been
found to be very useful in many fast arithmetic computations. It is
well-known that there exists a self-dual normal basis of
$\mathbb{F}_{2^n}$ over $\mathbb{F}_2$ if and only if $4\nmid n$. In
this paper, we prove there exists a normal element $\alpha$ of
$\mathbb{F}_{2^n}$ over $\mathbb{F}_{2}$ corresponding to a
prescribed vector $a=(a_0,a_1,\cdots,a_{n-1})\in \mathbb{F}_2^n$
such that $a_i=\mbox{Tr}_{2^n|2}(\alpha^{1+2^i})$ for $0\leq i\leq
n-1$, where $n$ is a 2-power or odd, if and only if the given vector
$a$ is symmetric ($a_i=a_{n-i}$ for all $i, 1\leq i\leq n-1$), and
one of the following is true.

1) $n=2^s\geq 4$, $a_0=1$, $a_{n/2}=0$, $\sum\limits_{1\leq i\leq
n/2-1, (i,2)=1}a_i=1$;

2) $n$ is odd, $(\sum\limits_{0\leq i\leq n-1}a_ix^i,x^n-1)=1$.

Furthermore we give an algorithm to obtain  normal elements
corresponding to prescribed vectors in the above two cases. For a
general positive integer $n$ with $4|n$, some necessary conditions
for a vector to be the corresponding vector of a normal element of
$\mathbb{F}_{2^n}$ over $\mathbb{F}_{2}$ are given. And for all $n$
with $4|n$, we prove that there exists a normal element of
$\mathbb{F}_{2^n}$ over $\mathbb{F}_2$ such that the Hamming weight
of its corresponding vector is 3, which is the lowest possible
Hamming weight.

\end{abstract}

%\textbf{AMS Math Subj Class: 05B10}
\par \textbf{Keywords:}
{\textit{Normal basis, Self-dual, Hamming weight, Reciprocal
polynomial, trace function}

\section{Introduction}

Suppose $\mathbb{F}_{q^n}$ is the extension field of degree $n$ over
the $q$ elements finite field $\mathbb{F}_q$, where $q$ is a prime
power. The trace function $\mathbb{F}_{q^n}$ to its subfield
$\mathbb{F}_{q^m}$ with $m|n$ is defined by
$\mbox{Tr}_{q^n|q^m}(x)=\sum\limits_{0\leq i\leq n/m-1}x^{q^{im}}$
for $x\in \mathbb{F}_{q^n}$.

A \textbf{normal basis} of $\mathbb{F}_{q^n}$ over $\mathbb{F}_{q}$ is a
basis of the form $N =\{\alpha,\alpha^q,\cdots, \alpha^{q^{n-1}}\}$,
where $\alpha$ is called a \textbf{normal element} of
$\mathbb{F}_{q^n}$ over $\mathbb{F}_{q}$. It is well-known that
there exists a normal basis for every finite field extension
$\mathbb{F}_{q^n}$ over $\mathbb{F}_{q}$.

Let $\{\alpha,\alpha^q, \cdots, \alpha^{q^{n-1}}\}$ and
$\{\beta,\beta^q,\cdots, \beta^{q^{n-1}}\}$ are two normal bases
of $\mathbb{F}_{q^n}$ over $\mathbb{F}_{q}$, if
\begin{equation*}
\mbox{Tr}_{q^n|q}(\alpha^{q^i}\cdot \beta^{q^j})=\left\{
\begin{array}{ll}
1,\ & i=j,\\
0,\ & i\neq j,
\end{array}
\right.
\end{equation*}
then $\{\beta,\beta^q,\cdots, \beta^{q^{n-1}}\}$ is called the
\textbf{dual basis} of $\{\alpha,\alpha^q,\cdots, \alpha^{q^{n-1}}\}$. If
$\alpha=\beta$, then the normal basis $\{\alpha,\alpha^q,\cdots,
\alpha^{q^{n-1}}\}$ is called \textbf{self-dual}.

It is well-known that normal bases and self-dual normal bases over finite
fields are very useful for some fast arithmetic computations (For example efficient
exponentiation, as $q$-th power of an element  is only  a cyclic
bit-shift of its coordinate vector). They are used to design simple and fast multipliers of
finite fields, which is one of the most time-consuming operations. Massey and Omura \cite{MO86}
patented a hardware multiplier under normal bases over
$\mathbb{F}_2$ and modifications of this multiplier  can be found in
\cite{wang,HW,RH,RH05,SK01}. Normal bases have also been implemented
efficiently in software, see, for example, \cite{DH,G00,VAZ}. Generally, it is known that normal bases and self-dual normal bases
have wide applications such as in coding theory, cryptography,
signal processing, etc.

Constructions of normal bases and self-dual bases have been
extensively studied in the past two decades. A non exhaustive list
of references is \cite{scheer,poli,nogami,pickett,arnau}.  The latest
results can be found for instance in \cite{pickett} and \cite{arnau}, where
explicit constructions of self-dual (integral) normal bases in
abelian extensions of finite and local fields were given.

The followings are standard results about normal bases.
\begin{theorem}\label{nbth}
(The normal basis theorem) For any prime power $q$ and positive
integer $n$, there is a normal basis in $\mathbb{F}_{q^n}$ over
$\mathbb{F}_q$.
\end{theorem}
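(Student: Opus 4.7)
The plan is to exhibit a normal basis by producing a cyclic vector for the Frobenius, viewed as an $\mathbb{F}_q$-linear endomorphism of $\mathbb{F}_{q^n}$. Set $\sigma:x\mapsto x^q$; it is $\mathbb{F}_q$-linear, so $\mathbb{F}_{q^n}$ becomes a module over the principal ideal domain $\mathbb{F}_q[t]$ with $t$ acting as $\sigma$. A normal element $\alpha$ is precisely an element whose $\mathbb{F}_q[t]$-cyclic submodule is all of $\mathbb{F}_{q^n}$, since the orbit $\{\alpha,\sigma\alpha,\ldots,\sigma^{n-1}\alpha\}$ equals $\{\alpha,\alpha^q,\ldots,\alpha^{q^{n-1}}\}$.

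The key computation is to determine the minimal polynomial $m_\sigma(t)\in\mathbb{F}_q[t]$. Because $x^{q^n}=x$ for every $x\in\mathbb{F}_{q^n}$, we have $m_\sigma(t)\mid t^n-1$. For the reverse direction I would invoke Dedekind's theorem on the linear independence of characters, which asserts that the distinct automorphisms $1,\sigma,\ldots,\sigma^{n-1}$ of $\mathbb{F}_{q^n}$ are $\mathbb{F}_{q^n}$-linearly independent as functions on $\mathbb{F}_{q^n}^\times$; in particular no $\mathbb{F}_q$-linear relation $\sum_{i=0}^{n-1}c_i\sigma^i=0$ can hold nontrivially. Consequently $m_\sigma(t)=t^n-1$, and $\deg m_\sigma=\dim_{\mathbb{F}_q}\mathbb{F}_{q^n}$.

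With the minimal polynomial of full degree in hand, the structure theorem for modules over $\mathbb{F}_q[t]$ forces the invariant-factor decomposition of $\mathbb{F}_{q^n}$ to collapse to a single summand, giving $\mathbb{F}_{q^n}\cong\mathbb{F}_q[t]/(t^n-1)$ as $\mathbb{F}_q[t]$-modules. Any preimage $\alpha$ of the class of $1$ under such an isomorphism is a cyclic vector, and $\{\alpha,\alpha^q,\ldots,\alpha^{q^{n-1}}\}$ is then an $\mathbb{F}_q$-basis of $\mathbb{F}_{q^n}$, i.e., a normal basis. The main obstacle lies in the case $p\mid n$ with $p=\mathrm{char}\,\mathbb{F}_q$, where $t^n-1$ is inseparable: one cannot deduce cyclicity from a Chinese-Remainder decomposition over distinct irreducible factors of $t^n-1$, and the linear-independence-of-characters step is the genuinely essential input that rules out $m_\sigma$ being a proper divisor.
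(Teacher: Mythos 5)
The paper does not actually prove this statement: Theorem \ref{nbth} is listed among the ``standard results about normal bases'' and is invoked as the classical normal basis theorem, so there is no in-paper argument to compare yours against. Your proof is the standard module-theoretic one and it is correct and complete: you turn $\mathbb{F}_{q^n}$ into an $\mathbb{F}_q[t]$-module via the Frobenius $\sigma$, use $\sigma^n=\mathrm{id}$ to get $m_\sigma(t)\mid t^n-1$, use Dedekind--Artin linear independence of the distinct automorphisms $1,\sigma,\ldots,\sigma^{n-1}$ to force $\deg m_\sigma\ge n$ and hence $m_\sigma(t)=t^n-1$, and then observe that a torsion module over a PID whose minimal polynomial has degree equal to the $\mathbb{F}_q$-dimension must have a single invariant factor, i.e.\ is cyclic; a cyclic generator is exactly a normal element. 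Each step is justified (the $\sigma^i$ are pairwise distinct because the Galois group is cyclic of order $n$, which you implicitly use and could state). The one remark I would adjust is your closing paragraph: linear independence of characters is the essential input in \emph{every} case, not only when $p\mid n$; the inseparability of $t^n-1$ obstructs only the alternative route through a Chinese Remainder decomposition into distinct irreducible factors, which your invariant-factor argument sidesteps uniformly.
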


\begin{theorem}\cite{lempel}\label{scn}
There is a self-dual normal basis of $\mathbb{F}_{q^n}$ over
$\mathbb{F}_{q}$ if and only if one of the following is true.

1) $q$ and $n$ are odd;

2) $q$ is even and $n\neq 0\pmod{4}$.
\end{theorem}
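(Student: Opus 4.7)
The plan is to prove sufficiency and necessity separately. The key reformulation is that a normal basis $\{\alpha^{q^i}\}_{i=0}^{n-1}$ is self-dual precisely when the trace vector $T_k := \mbox{Tr}_{q^n|q}(\alpha^{1+q^k})$ equals $(1,0,\ldots,0)$. As a preliminary I would note that for any normal element $\mbox{Tr}_{q^n|q}(\alpha) = \beta \neq 0$ (otherwise the conjugates $\{\alpha^{q^i}\}$ would be linearly dependent over $\mathbb{F}_q$), and a direct computation gives $\sum_{k} T_k = \mbox{Tr}_{q^n|q}(\alpha \cdot \mbox{Tr}_{q^n|q}(\alpha)) = \beta \cdot \mbox{Tr}_{q^n|q}(\alpha) = \beta^2$; moreover $T_k = T_{n-k}$, so the trace vector is symmetric. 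Any self-dual basis therefore forces $\beta^2 = 1$.

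For sufficiency I would attempt explicit constructions in each permitted case. When $q$ and $n$ are both odd, the natural candidate is a Gauss-period construction $\alpha = \theta + \theta^{-1}$ for a suitably chosen root of unity $\theta$ in an extension containing $\mathbb{F}_{q^n}$, with the trace conditions verified via cyclotomic identities. When $q$ is even and $n \not\equiv 0\pmod 4$, I would split into $n$ odd (analogous, with the simplification that $\mbox{Tr}_{q^n|q}(x^2) = \mbox{Tr}_{q^n|q}(x)$ in characteristic $2$) and $n \equiv 2\pmod 4$ (handled by lifting a self-dual normal basis of $\mathbb{F}_{q^{n/2}}/\mathbb{F}_q$ through the quadratic subextension $\mathbb{F}_{q^n}/\mathbb{F}_{q^{n/2}}$). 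A uniform fallback is a dimension/counting argument showing that the map $\alpha \mapsto (T_0,\ldots,T_{n-1})$ from normal elements to admissible symmetric vectors over $\mathbb{F}_q$ hits $(1,0,\ldots,0)$.

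For necessity I would look for an invariant of normal trace vectors that excludes $(1,0,\ldots,0)$ when the hypotheses fail. When $q$ is odd and $n$ is even, a natural candidate is a discriminant-type invariant coming from the theory of symmetric bilinear forms together with the circulant structure imposed by normality on the Gram matrix $G_{ij} = T_{j-i}$; one expects this invariant to be nontrivial exactly when $n$ is even. When $q$ is even and $4\mid n$, the usual discriminant over $\mathbb{F}_2$ is trivial, so one needs a more delicate parity or mod-$2$ argument: for instance, showing directly that the Hamming weight of any normal trace vector is at least $3$ in this regime would immediately rule out the weight-$1$ self-dual target. The main obstacle is precisely this last case, because in characteristic $2$ the naive quadratic-form invariants degenerate (the would-be quadratic form $Q(x)=\mbox{Tr}_{q^n|q}(x^2)$ coincides with the linear form $\mbox{Tr}_{q^n|q}(x)$) and one must isolate a subtler combinatorial constraint on the image of $\alpha\mapsto(T_0,\ldots,T_{n-1})$ to force the Hamming weight away from $1$ whenever $4\mid n$.
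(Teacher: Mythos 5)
The paper does not prove this statement at all: it is quoted from Lempel and Weinberger \cite{lempel}, so there is no internal proof to compare against. Judged on its own, your proposal is a plan rather than a proof, and both load-bearing steps are missing. On the sufficiency side, the Gauss-period candidate $\alpha=\theta+\theta^{-1}$ does not exist for arbitrary odd $q$ and $n$ (such constructions need a root of unity of suitable order with $q$ generating an appropriate subgroup, conditions that fail for general $n$), so it cannot by itself settle case 1); the ``lifting'' step for $n\equiv 2\pmod 4$ is salvageable as a Pincin-type product of a self-dual normal element of $\mathbb{F}_{q^{2}}$ over $\mathbb{F}_q$ with one of $\mathbb{F}_{q^{n/2}}$ over $\mathbb{F}_q$ (the degrees $2$ and $n/2$ being coprime), but that requires knowing the trace vector of a product is the componentwise product of trace vectors, which is exactly the content of Proposition \ref{generalconst} and is not justified in your sketch; and the ``dimension/counting'' fallback is not an argument --- determining the image of $\alpha\mapsto(T_0,\dots,T_{n-1})$ is precisely the hard problem occupying Sections 3 and 4 of this paper.

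On the necessity side, your outline for $q$ odd and $n$ even is sound and needs no normality: a self-dual basis forces the trace form to have square determinant, while in a power basis its determinant is the discriminant of the minimal polynomial of a generator, which is a nonsquare because Frobenius acts on the roots as an $n$-cycle, an odd permutation when $n$ is even. But you explicitly leave open the only case that distinguishes this theorem, namely $q$ even with $4\mid n$, and the constraint you are hunting for is much cheaper than ``Hamming weight at least $3$.'' For $q=2$, set $u=\sum_{i\ \mathrm{odd}}\alpha^{2^i}$; then $u^2=\sum_{j\ \mathrm{even}}\alpha^{2^j}$, so $u+u^2=\mbox{Tr}_{2^n|2}(\alpha)=T_0=1$, hence $u\cdot u^2=u(u+1)=u^2+u=1$; on the other hand, pairing the cross terms (this is where $4\mid n$ is used, since $n/2$ is then even and no exponent difference self-pairs) gives $u\cdot u^2=\sum_{1\le i\le n/2-1,\ i\ \mathrm{odd}}T_i$. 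The target vector $(1,0,\dots,0)$ makes this sum $0$, a contradiction; this identity is exactly how the necessity part of Theorem \ref{2powercase} and Proposition 5.2 of the present paper exclude self-dual normal bases when $4\mid n$. Without this identity, or an equivalent substitute, your proposal does not close.
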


A vector $(a_0,a_1,\cdots,a_{n-1})$ in $\mathbb{F}_q^n$ is called to
be \textbf{symmetric} if $a_i=a_{n-i}$ for all $i=1,
\ldots, n-1$, and a polynomial
$f(x)=a_0+a_1x+a_2x^2+\cdots+a_{n-1}x^{n-1}$ in
$\mathbb{F}_q[x]/(x^n-1)$ is \textbf{symmetric} if
$a_i=a_{n-i}$ for all $i, 1\leq i\leq n-1$.

For $\alpha\in \mathbb{F}_{q^n}$, let
$a_i=\mbox{Tr}_{q^n|q}(\alpha\cdot \alpha^{q^i})$, $0\leq i\leq
n-1$. Throughout the paper, the vector
$a=(a_0,a_1,\cdots,a_{n-1})\in \mathbb{F}_{q}^n$ is called
\textbf{the corresponding vector} of $\alpha$, and the vector $a$
can be viewed as \textbf{the trace self-orthogonal relation} of
$\alpha$. It is obvious that $a$ and $f(x)=\sum\limits_{0\leq i\leq
n-1}a_ix^i$ are both symmetric.

Our interests in normal bases with good
trace self-orthogonal relations stem both from mathematical theory and
practical applications, here we mean by \textbf{good} that the corresponding
vector of a normal element has the lowest possible Hamming weight. For example, by using normal bases with good
trace self-orthogonal relations, one can build a more simpler
corresponding relation between trace functions on finite field
$\mathbb{F}_{2^n}$ and boolean functions on $\mathbb{F}_2^n$.
When a normal basis is used, the function $f(x)=Tr_{2^n|2}(x^d)\in \mathbb{F}_{2^n}[x], 1<d<2^n-1$, can be transformed to
the so-called \textit{rotation symmetric boolean function} over $\mathbb{F}_2^n$, where \textit{rotation symmetric boolean functions}
have been found to have important applications in the design of cryptographic algorithms \cite{kavut}. Let $\alpha\in\mathbb{F}_{2^n}$ be a normal element,
then $x=x_0\alpha+x_1\alpha^2+\cdots+x_{n-1}\alpha^{2^{n-1}}$ for all $x\in\mathbb{F}_{2^n}$,
 where $(x_0,\cdots,x_{n-1})\in \mathbb{F}_2^n$. In the case that $d=2^i+1(0<i<n)$,
\begin{equation*}
\begin{array}{ll}
&Tr_{2^n|2}(x^{1+2^i})\\
&=Tr_{2^n|2}((x_0\alpha+x_1\alpha^2+\cdots+x_{n-1}\alpha^{2^{n-1}})(x_0\alpha^{2^i}+x_1\alpha^{2^{i+1}}+\cdots+x_{n-1}\alpha^{2^{i-1}}))\\
&=\sum\limits_{1\leq j<\lceil n/2\rceil}Tr_{2^n|2}(\alpha\alpha^{2^{i+j}}+\alpha\alpha^{2^{i-j}})(x_0x_j+x_1x_{j+1}+\cdots+x_{n-1}x_{j-1})\\
&\ \ \ \ \ \ \ \ \ \ \ +Tr_{2^n|2}(\alpha\alpha^{2^i})(x_0+\cdots+x_{n-1})\\
&=f_\alpha(x_0,\cdots,x_{n-1}).
\end{array}
\end{equation*}
Thus if the corresponding vector of $\alpha$ has fewer $1$s, the rotation symmetric function $f_\alpha(x_0,\cdots,x_{n-1})$  has fewer cycles.

At the practical aspect, by using normal bases with good trace
self-orthogonal relations, one can achieve high computation
efficiency in the implementations of finite field arithmetic in some
cryptography systems or communication systems. For example a self-dual normal basis multiplier
was presented by Wang \cite{wang} with very low complexity. Whenever $\mathbb{F}_{2^n}$ doesn't has a self-dual normal basis over $\mathbb{F}_{2}$, we can similarly design multiplier by using normal basis with good trace
self-orthogonal relations to reduce the number of trace $Tr_{2^n|2}(\alpha^{1+2^i+2^j})$ computations.

While by Theorem \ref{scn}, there doesn't exist self-dual normal
basis of $\mathbb{F}_{2^n}$ over $\mathbb{F}_{2}$ when $4|n$.
Therefore for any $n$ with $4|n$, one can ask the following
questions naturally.

(1)\quad What the trace self-orthogonal relation of a normal basis
of $\mathbb{F}_{2^n}$ over $\mathbb{F}_{2}$ should be?

(2)\quad  What is the lowest possible Hamming weight of the vector
corresponding to a normal basis? Or generally what does a valid
vector in $\mathbb{F}_2^n$ corresponding to a normal element look
like?

(3)\quad For a given valid vector in $\mathbb{F}_2^n$, can one
construct a normal element of $\mathbb{F}_{2^n}$ over
$\mathbb{F}_{2}$ corresponding to such a given vector?

To our knowledge, there have not been such researches. In the present paper, we focus our attention
on the above questions. Characterizations of special differential
factorizations of some polynomials will be given, by which we obtain
the necessary and sufficient conditions for a vector corresponding
to a normal element of $\mathbb{F}_{2^n}$ over $\mathbb{F}_{2}$
where $n$ is a 2-power or an odd number. Also we present an algorithm
to find normal elements corresponding to a prescribed vector, and
show that there exists a normal element of $\mathbb{F}_{2^n}$ over
$\mathbb{F}_2$ for every $n$ with $4|n$ such that the Hamming weight
of its corresponding vector is 3, which is the lowest possible
Hamming weight.

For our method to work, we define the generalized reciprocal
polynomial of a polynomial in $\mathbb{F}_{2^m}[z]/(z^n-1)$ as following.
\begin{definition}
Let $g(z)=\sum\limits_{0\leq i\leq n-1} b_iz^i\in \mathbb{F}_{2^m}[z]/(z^n-1)$, the reciprocal polynomial of $g(z)$ is defined as
$$g^*(z)=\sum\limits_{0\leq i\leq n-1} b_iz^{n-i}\pmod{z^n-1}.$$
\end{definition}

This paper is organized as follows. In Section 2, some facts about
normal bases are introduced, and  a few Lemmas which will be used
are listed. In Section 3, we characterize the necessary and
sufficient conditions for $h(z)\equiv g(z) g^*(z)\pmod{(2,z^n-1)}$ for $n=2^s$, and the necessary and sufficient conditions
for a vector to be a corresponding vector of a normal element of
$\mathbb{F}_{2^{2^s}}$ over $\mathbb{F}_2$. In Section 4, we characterize the necessary
and sufficient condition for $h(z)\equiv g(z) g^*(z)\pmod{(2,z^n-1)}$ in the odd case $n$, and present a necessary and
sufficient condition for a vector to be a corresponding vector of a
normal element of $\mathbb{F}_{2^{n}}$ over $\mathbb{F}_2$ where $n$
is odd. Furthermore we give an algorithm to find such normal elements
with a prescribed corresponding vector when $n=2^s$ or $n$ is odd. Some necessary conditions
for a vector to be the corresponding vector of a normal element of
$\mathbb{F}_{2^{n}}$ over $\mathbb{F}_2$ where $4|n$ are obtained in
Section 5. We give the combination method to construct a normal
element with good trace self-orthogonal relation. Especially we can
find a normal element of $\mathbb{F}_{2^{n}}$ over $\mathbb{F}_2$
such that the Hamming weight of its corresponding vector is $3$ for
every $n$ with $4|n$.

\section{Preliminaries}
We make the convention that all polynomials and all arithmetic
about polynomials are assumed to be in the ring
$\mathbb{F}_q[x]/(x^n-1)$ in this paper, where $q$ is a 2-power.
For simplicity, sometimes we use $(f(x),g(x))$ to represent the
greatest common divisor of $f(x)$ and $g(x)$. This section contains
some Lemmas about polynomials in $\mathbb{F}_q[x]/(x^n-1)$.

\begin{theorem}\label{norm1}\cite{gao}
Let $\alpha\in\mathbb{F}_{2^n}$ and
$a_i=\mbox{\textup{Tr}}_{q^n|q}(\alpha\alpha^{q^i})(0\leq i\leq
n-1)$. Then $\alpha$ is a normal element of $\mathbb{F}_{q^n}$ over
$\mathbb{F}_{q}$ if and only if the polynomial $N(x)=\sum\limits_{0\leq i\leq
n-1}a_ix^i\in \mathbb{F}_q[x]$ is relatively prime to $x^n-1$.
\end{theorem}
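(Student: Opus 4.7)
The plan is to reduce the normality of $\alpha$ to the invertibility of an explicit circulant matrix, and then invoke the standard dictionary between circulant matrices over $\mathbb{F}_q$ and the quotient ring $\mathbb{F}_q[x]/(x^n-1)$.

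First I would reformulate normality: $\alpha$ is a normal element exactly when its conjugates $\alpha,\alpha^q,\ldots,\alpha^{q^{n-1}}$ form a basis of $\mathbb{F}_{q^n}$ over $\mathbb{F}_q$, i.e., are $\mathbb{F}_q$-linearly independent. Because the trace pairing $(x,y)\mapsto\mbox{Tr}_{q^n|q}(xy)$ is a non-degenerate symmetric $\mathbb{F}_q$-bilinear form on the $n$-dimensional space $\mathbb{F}_{q^n}$, a collection of $n$ vectors is linearly independent iff the associated Gram matrix is nonsingular. So normality is equivalent to invertibility of
\[
M=\bigl(\mbox{Tr}_{q^n|q}(\alpha^{q^i}\alpha^{q^j})\bigr)_{0\le i,j\le n-1}\in\mathbb{F}_q^{n\times n}.
\]

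Second, I would compute $M$ explicitly. Using that the trace is Frobenius-invariant and $(xy)^{q^i}=x^{q^i}y^{q^i}$,
\[
\mbox{Tr}_{q^n|q}(\alpha^{q^i}\alpha^{q^j})=\mbox{Tr}_{q^n|q}\bigl((\alpha\cdot\alpha^{q^{j-i}})^{q^i}\bigr)=\mbox{Tr}_{q^n|q}(\alpha\cdot\alpha^{q^{j-i}})=a_{(j-i)\bmod n}.
\]
Hence $M$ is precisely the circulant matrix with first row $(a_0,a_1,\ldots,a_{n-1})$.

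Third, I would pass to the group algebra. The algebra of $n\times n$ circulant matrices over $\mathbb{F}_q$ is isomorphic to $R:=\mathbb{F}_q[x]/(x^n-1)$ via the map sending the circulant with first row $(a_0,\ldots,a_{n-1})$ to $N(x)=\sum_{i=0}^{n-1}a_ix^i$; under the identification $\mathbb{F}_q^n\cong R$ this circulant acts as multiplication by $N(x)$. Therefore $M$ is invertible iff $N(x)$ is a unit in $R$, which occurs iff $\gcd(N(x),x^n-1)=1$ in $\mathbb{F}_q[x]$. Combining the three steps yields the theorem.

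The only delicate point is step three when $\gcd(n,q)>1$ (which is the main case of interest here, since $q$ is a $2$-power and $n$ may be even), because one cannot diagonalize the circulant via an $n$-th root of unity / DFT and read off its determinant as a product of eigenvalues. I would avoid this entirely: the multiplication-in-$R$ description of the circulant action does not require $\gcd(n,q)=1$, and the unit criterion $\gcd(N(x),x^n-1)=1$ is valid in arbitrary characteristic. This is what makes the proof go through uniformly in $n$ and $q$, and it is the argument I would emphasize as the cornerstone of the proof.
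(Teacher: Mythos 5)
Your proof is correct and complete. The paper itself gives no proof of this statement --- it is quoted from \cite{gao} as a known result --- but your argument (Gram matrix of the conjugates under the non-degenerate trace form, recognition of that matrix as the circulant $N(J_n)$, and invertibility of a circulant being equivalent to $N(x)$ being a unit in $\mathbb{F}_q[x]/(x^n-1)$, i.e.\ $\gcd(N(x),x^n-1)=1$) is exactly the standard one, and it uses the same circulant-matrix machinery the paper deploys in its proof of Theorem~\ref{polybasis}. Your closing remark is well taken: since $J_n$ is non-derogatory, the commutant/unit argument works without any root-of-unity diagonalization, so the criterion holds even when $\gcd(n,q)>1$.
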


\begin{theorem}\label{guodu}\cite{perlis}
Let $\alpha$ be a normal element of $\mathbb{F}_{q^n}$ over
$\mathbb{F}_{q}$. $\beta=\sum\limits_{0\leq i\leq
n-1}c_i\alpha^{q^i}$ is also a normal element if and only if the polynomial
$N(x)=\sum\limits_{0\leq i\leq n-1}c_ix^i\in \mathbb{F}_q[x]$ is
relatively prime to $x^n-1$.
\end{theorem}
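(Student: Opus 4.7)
My approach will exploit the $\mathbb{F}_q[x]/(x^n-1)$-module structure on $\mathbb{F}_{q^n}$ in which $x$ acts as the Frobenius $\sigma\colon y\mapsto y^q$. For any $\gamma\in\mathbb{F}_{q^n}$ I define the evaluation map
\[
\phi_\gamma\colon\mathbb{F}_q[x]/(x^n-1)\longrightarrow\mathbb{F}_{q^n},\qquad f(x)\longmapsto f(\sigma)(\gamma)=\sum_{i}f_i\gamma^{q^i}.
\]
Since $\phi_\gamma$ carries $\{1,x,\ldots,x^{n-1}\}$ to $\{\gamma,\gamma^q,\ldots,\gamma^{q^{n-1}}\}$, the element $\gamma$ is normal precisely when $\phi_\gamma$ is surjective, which by dimension count is equivalent to $\phi_\gamma$ being an isomorphism. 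The hypothesis that $\alpha$ is normal therefore amounts to $\phi_\alpha$ being an $\mathbb{F}_q$-linear bijection.

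The heart of the proof is the identity $\phi_\beta=\phi_\alpha\circ m_{N(x)}$, where $m_{N(x)}$ denotes multiplication by $N(x)$ in $\mathbb{F}_q[x]/(x^n-1)$. Indeed $\beta=N(\sigma)(\alpha)=\phi_\alpha(N(x))$, and for any $g\in\mathbb{F}_q[x]/(x^n-1)$,
\[
\phi_\beta(g(x))=g(\sigma)\bigl(N(\sigma)(\alpha)\bigr)=(gN)(\sigma)(\alpha)=\phi_\alpha\bigl(g(x)N(x)\bigr).
\]
Because $\phi_\alpha$ is already a bijection, $\beta$ is normal iff $\phi_\beta$ is a bijection iff $m_{N(x)}$ is a bijection iff $N(x)$ is a unit in $\mathbb{F}_q[x]/(x^n-1)$, which is standardly equivalent to $\gcd(N(x),x^n-1)=1$. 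Both directions of the iff follow at once from this chain.

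An alternative route, avoiding modules, is to chain this result onto Theorem \ref{norm1}. Setting $A(x)=\sum_i a_i x^i$ with $a_i=\mathrm{Tr}(\alpha\alpha^{q^i})$, and $B(x)=\sum_j b_j x^j$ with $b_j=\mathrm{Tr}(\beta\beta^{q^j})$, a Frobenius-invariance computation ($\mathrm{Tr}(\alpha^{q^i}\alpha^{q^{j+k}})=a_{j+k-i}$, indices mod $n$) yields $B(x)\equiv N(x)N^{\ast}(x)A(x)\pmod{x^n-1}$. The involution $f\mapsto f^{\ast}$ is an algebra automorphism of $\mathbb{F}_q[x]/(x^n-1)$ (it is induced by $x\mapsto x^{n-1}$), so it preserves units; hence $\gcd(N^{\ast}(x),x^n-1)=1$ iff $\gcd(N(x),x^n-1)=1$, and combining with $\gcd(A(x),x^n-1)=1$ from Theorem \ref{norm1} applied to $\alpha$, Theorem \ref{norm1} applied to $\beta$ completes the equivalence. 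The only step requiring genuine attention in either route — and the one I would verify most carefully — is the bookkeeping of index shifts modulo $n$ when collapsing cross-trace terms, so that Frobenius orbits correspond correctly to reductions in $\mathbb{F}_q[x]/(x^n-1)$; once that is in place the argument is essentially formal.
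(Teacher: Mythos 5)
Your first argument is correct and complete. The paper itself offers no proof of this statement --- it is quoted from \cite{perlis} --- so there is no in-paper argument to compare against; judged on its own, your module-theoretic route is the standard and cleanest one: $\mathbb{F}_{q^n}$ as an $\mathbb{F}_q[x]/(x^n-1)$-module via Frobenius, normality of $\gamma$ equivalent to bijectivity of $\phi_\gamma$ by dimension count, the identity $\phi_\beta=\phi_\alpha\circ m_{N(x)}$ (which holds because $g(\sigma)\circ N(\sigma)=(gN)(\sigma)$ and $\sigma^n=\mathrm{id}$), and the fact that $m_{N(x)}$ is bijective iff $N(x)$ is a unit iff $\gcd(N(x),x^n-1)=1$. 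Every link in that chain is justified. Your alternative route is also sound and is closer in spirit to the paper's toolkit: the relation $B(x)\equiv N(x)N^*(x)A(x)\pmod{x^n-1}$ is exactly Theorem \ref{polybasis}, and combining it with Theorem \ref{norm1} applied to $\alpha$ and to $\beta$, together with the observation that $f\mapsto f^*$ is the automorphism induced by $x\mapsto x^{n-1}$ and hence preserves coprimality with $x^n-1$, gives the equivalence; since the paper's proof of Theorem \ref{polybasis} does not invoke the present theorem, there is no circularity, though as written in the paper that result appears after this one, so the first route is the one to prefer for a self-contained proof.
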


For symmetric polynomials in $\mathbb{F}_{q}[x]/(x^n-1)$, we have
the following Lemma 2.3.
\begin{lemma}\label{inv}
Suppose $f(x)=\sum\limits_{0\leq i\leq n-1}a_ix^i\in
\mathbb{F}_q[x]$ with $a_0=1$ is symmetric and is relatively prime
to $x^n-1$. Let $f^{-1}(x)=\sum\limits_{0\leq i\leq n-1}b_ix^i\in
\mathbb{F}_q[x]$ be the unique polynomial such that $f(x)\cdot
f^{-1}(x)\equiv 1\pmod{x^n-1}$. Then $f^{-1}(x)$ is
symmetric, relatively prime to $x^n-1$, and its constant term is
$1$.
\end{lemma}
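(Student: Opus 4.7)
The plan is to establish the three conclusions in sequence. The coprimality of $f^{-1}$ with $x^n-1$ is immediate: from $f \cdot f^{-1} \equiv 1 \pmod{x^n-1}$, any common divisor of $f^{-1}$ and $x^n-1$ would have to divide $1$. For the symmetry, I would use that the reciprocation map $g \mapsto g^*$ is an $\mathbb{F}_q$-algebra automorphism of $\mathbb{F}_q[x]/(x^n-1)$, being induced by $x \mapsto x^{n-1} = x^{-1}$, and in particular satisfies $(gh)^* = g^* h^*$. Applying $*$ to $f \cdot f^{-1} \equiv 1$ and using $f^* = f$ yields $f \cdot (f^{-1})^* \equiv 1$; uniqueness of the inverse then forces $(f^{-1})^* = f^{-1}$.

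For the constant term, I would equate the coefficients of $x^0$ on both sides of $f \cdot f^{-1} \equiv 1 \pmod{x^n-1}$, which yields $a_0 b_0 + \sum_{i=1}^{n-1} a_i b_{n-i} = 1$. Substituting $a_0 = 1$ and using the just-established symmetry $b_{n-i} = b_i$ reduces this to $b_0 + \sum_{i=1}^{n-1} a_i b_i = 1$. Pairing the terms via $i \leftrightarrow n-i$, every honest pair contributes $a_i b_i + a_{n-i} b_{n-i} = 2\, a_i b_i = 0$ in characteristic $2$. If $n$ is odd every index pairs with a distinct partner, so the sum vanishes and $b_0 = 1$ immediately.

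For $n$ even the only unpaired index is $i = n/2$, leaving the residual equation $b_0 + a_{n/2} b_{n/2} = 1$; showing that this extra term actually vanishes is the step I expect to require the most care. For this I would apply the same pairing trick to $f(1) = \sum_{i=0}^{n-1} a_i$, which by the symmetry and characteristic $2$ collapses to $f(1) = a_0 + a_{n/2} = 1 + a_{n/2}$. Since $f$ is coprime to $x^n-1$ it is in particular coprime to $x-1$, so $f(1) \neq 0$, forcing $a_{n/2} = 0$ and hence $b_0 = 1$.
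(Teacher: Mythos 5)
Your proof is correct and follows essentially the same route as the paper's: coprimality from the B\'ezout relation, symmetry by applying the multiplicative reciprocation map to $f\cdot f^{-1}\equiv 1$ and using $f=f^*$ (you finish by uniqueness of the inverse, the paper by cancelling $f$ via $(f,x^n-1)=1$ --- the same step), and the constant term by pairing indices in characteristic $2$, with the even case resolved by showing $a_{n/2}=0$ from $f(1)\neq 0$, which is exactly the paper's ``otherwise $x-1$ divides $f$'' argument. One shared caveat: your inference ``$f(1)=1+a_{n/2}\neq 0$ forces $a_{n/2}=0$'' (like the paper's ``suppose $a_{n/2}=1$'') tacitly assumes the coefficients lie in $\mathbb{F}_2$; for a general $2$-power $q$ it only yields $a_{n/2}\neq 1$, but this is an issue inherited from the paper, not introduced by you.
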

\begin{proof}
Since $f(x)\cdot f^{-1}(x)\equiv 1\pmod{x^n-1}$, there
exists a polynomial $g(x)\in \mathbb{F}_q[x]$ such that
\begin{eqnarray}\label{egcd}
f(x) f^{-1}(x)+g(x) (x^n-1)=1.
\end{eqnarray}

It is easy to see that $(f^{-1}(x),x^n-1)=1$ and the condition that
$f(x)=\sum\limits_{0\leq i\leq n-1}a_ix^i$ is symmetric is equivalent to $f(x)=f^*(x)$.

From equation (\ref{egcd}) we have
\begin{eqnarray}\label{egcd1}
f^*(x) {f^{-1}}^*(x)\equiv 1\pmod{x^n-1}.
\end{eqnarray}
Thus by equations (\ref{egcd}) and (\ref{egcd1}) we get
\begin{eqnarray}\label{egcd2}
f(x)(f^{-1}(x)-{f^{-1}}^*(x))\equiv 0\pmod{x^n-1}.
\end{eqnarray}

Since $(f(x),x^n-1)=1$, from (\ref{egcd2}) we deduce that $f^{-1}(x)={f^{-1}}^*(x)$, which proves that
$f^{-1}(x)$ is symmetric.

Considering the constant terms of equation (\ref{egcd}) , we obtain
\begin{eqnarray}\label{egcd3} a_0b_0+\sum\limits_{0< i,j\leq n-1, i+j=n}a_ib_j=1. \end{eqnarray}

Since $f(x),f^{-1}(x)$ are symmetric, $a_ib_j=a_{n-i}b_{n-j}$ for all $0< i,j\leq n-1$.
Thus for odd $n$, $\sum\limits_{0< i,j\leq n-1, i+j=n}a_ib_j=\sum\limits_{0< i<n/2,0<j\leq n-1, i+j=n}a_ib_j+\sum\limits_{0< i<n/2,0<j\leq n-1, i+j=n}a_{n-i}b_{n-j}=0$. So from (\ref{egcd3}) $a_0b_0=1$, which proves that the constant term $b_0$ of $f^{-1}(x)$ is 1.

For even $n$, we show that $a_{n/2}=b_{n/2}=0$ firstly. Otherwise suppose $a_{n/2}=1$, then the symmetric polynomial
$f(x)=1+x^{n/2}+\sum\limits_{0<i<n/2}a_i(x^i+x^{n-i})$ have a divisor $x-1$, contradictory to the assumption that $(f(x),x^n-1)=1$.
Similarly $b_{n/2}=1$ is impossible.  Therefore $\sum\limits_{0< i,j\leq n-1, i+j=n}a_ib_j=\sum\limits_{0< i<n/2,0<j\leq n-1, i+j=n}a_ib_j+\sum\limits_{0< i<n/2,0<j\leq n-1, i+j=n}a_{n-i}b_{n-j}+a_{n/2}b_{n/2}=0$. So from equation (\ref{egcd3}) we have $a_0b_0=1$ and it follows again that $b_0=1$, which completes the proof.
\end{proof}

Also we give the following obvious lemma without providing the
proof.
\begin{lemma}\label{mult}
Let $f_a(x)=\sum\limits_{0\leq i\leq
n-1}a_ix^i\in\mathbb{F}_q[x]/(x^n-1)$ and $f_b(x)=\sum\limits_{0\leq
i\leq n-1}b_ix^i\in\mathbb{F}_q[x]/(x^n-1)$ be symmetric
polynomials. Then $f_c(x)=f_a(x) f_b(x)=\sum\limits_{0\leq
i\leq n-1}c_ix^i\in\mathbb{F}_q[x]/(x^n-1)$ is also symmetric.
\end{lemma}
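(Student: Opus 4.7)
The lemma says the product of two symmetric elements of $\mathbb{F}_q[x]/(x^n-1)$ is symmetric, so the plan is to translate the symmetry condition into an algebraic identity and then exploit multiplicativity. Specifically, I would first observe that, modulo $x^n-1$, the relation $a_i=a_{n-i}$ for all $1\le i\le n-1$ is equivalent to the identity $f(x)=f^*(x)$, where $f^*$ is the reciprocal polynomial defined just before the lemma. This is immediate from reading off coefficients: the coefficient of $x^j$ in $f^*(x)=\sum_i a_i x^{n-i}\pmod{x^n-1}$ is $a_{-j\bmod n}$, so $f=f^*$ exactly means $a_j=a_{-j\bmod n}=a_{n-j}$.

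Next, I would establish the multiplicativity rule
\[
 (f_a\,f_b)^*(x)\;\equiv\;f_a^*(x)\,f_b^*(x)\pmod{x^n-1}.
\]
The cleanest way is to notice that the substitution $x\mapsto x^{n-1}$ is a ring automorphism of $\mathbb{F}_q[x]/(x^n-1)$ (its square is the identity, since $(x^{n-1})^{n-1}=x^{(n-1)^2}\equiv x\pmod{x^n-1}$), and that $f^*(x)\equiv f(x^{n-1})\pmod{x^n-1}$ for any $f$. Multiplicativity then follows from the ring-homomorphism property. If one prefers a direct coefficient argument, writing indices mod $n$ gives
\[
 (f_a f_b)^*_k \;=\; (f_a f_b)_{-k} \;=\; \sum_{i+j\equiv -k}a_i b_j \;=\; \sum_{i'+j'\equiv k} a_{-i'}b_{-j'} \;=\; (f_a^* f_b^*)_k,
\]
after the change of variables $i'=-i$, $j'=-j$.

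Finally, combining the two steps: by symmetry of $f_a$ and $f_b$ we have $f_a=f_a^*$ and $f_b=f_b^*$, hence
\[
 f_c^* \;=\; (f_a f_b)^* \;=\; f_a^* f_b^* \;=\; f_a f_b \;=\; f_c,
\]
which is the required symmetry of $f_c$. There is no real obstacle here; the only point worth a moment's care is the behavior of the $i=0$ term in the definition of $f^*$, where $x^{n-0}=x^n\equiv 1$, so the constant coefficient is preserved as one would expect and the formula $f^*(x)\equiv f(x^{n-1})$ is valid in $\mathbb{F}_q[x]/(x^n-1)$ without exceptions. That is why the author regards the lemma as obvious.
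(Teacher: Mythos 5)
Your proof is correct. Note that the paper does not actually supply a proof of this lemma --- it is stated as ``obvious'' and the proof is omitted --- so there is nothing to compare against; your argument fills that gap cleanly. Both of your routes are sound: the identification of symmetry with $f=f^*$, the observation that $f^*(x)\equiv f(x^{n-1})\pmod{x^n-1}$ and that $x\mapsto x^{n-1}$ induces a well-defined ring endomorphism of $\mathbb{F}_q[x]/(x^n-1)$ (well-defined because $(x^{n-1})^n=1$, and an involution hence an automorphism), and the resulting multiplicativity $(f_af_b)^*=f_a^*f_b^*$ immediately give $f_c^*=f_c$. The direct coefficient computation via the substitution $i'=-i$, $j'=-j$ in $\sum_{i+j\equiv -k}a_ib_j$ is an equally valid, more elementary alternative, and your remark about the $i=0$ term is the only boundary case worth checking; you handled it correctly.
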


\begin{theorem}\label{polybasis}
Let $\beta\in \mathbb{F}_{q^n}$ and $\alpha=\sum\limits_{0\leq i\leq
n-1}c_i\beta^{q^i}, c_i\in \mathbb{F}_q$ with corresponding vectors $a=(a_0,a_1,\cdots,a_{n-1})$ and
$b=(b_0,b_1,\cdots,b_{n-1})$ respectively. Then
$f_a(x)=\sum\limits_{0\leq i\leq n-1}a_ix^i$,
$f_b(x)=\sum\limits_{0\leq i\leq n-1}b_ix^i$, and
$f_c(x)=\sum\limits_{0\leq i\leq n-1}c_ix^i$ satisfy
$$f_a(x)\equiv f_b(x) f_c(x) f_c^*(x)(\mbox{\textup{mod}}\ x^n-1).$$
\end{theorem}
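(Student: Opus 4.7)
The plan is to compute the coefficients $a_i = \mathrm{Tr}_{q^n|q}(\alpha\alpha^{q^i})$ directly by expanding $\alpha = \sum_j c_j \beta^{q^j}$, and then verify that the result agrees coefficient-by-coefficient with the convolution that defines $f_b(x) f_c(x) f_c^*(x)$ in $\mathbb{F}_q[x]/(x^n-1)$. All index arithmetic below is implicitly taken modulo $n$.

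First I would apply Frobenius to get $\alpha^{q^i} = \sum_k c_k \beta^{q^{k+i}}$, using $c_k^q = c_k$ since $c_k\in\mathbb{F}_q$. Bilinearity of trace then gives
\begin{equation*}
a_i \;=\; \mathrm{Tr}_{q^n|q}\!\Bigl(\sum_{j,k} c_j c_k\, \beta^{q^j}\beta^{q^{k+i}}\Bigr)
\;=\; \sum_{j,k} c_j c_k\, \mathrm{Tr}_{q^n|q}\bigl(\beta^{q^j}\beta^{q^{k+i}}\bigr).
\end{equation*}
Using that $\mathrm{Tr}_{q^n|q}$ is Frobenius-invariant, $\mathrm{Tr}_{q^n|q}(\beta^{q^j}\beta^{q^{k+i}}) = \mathrm{Tr}_{q^n|q}(\beta\cdot\beta^{q^{k+i-j}}) = b_{k+i-j}$, so
\begin{equation*}
a_i \;=\; \sum_{j,k} c_j c_k\, b_{k+i-j}.
\end{equation*}

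Next I would compute the coefficient of $x^i$ in $f_c(x) f_c^*(x)$ in $\mathbb{F}_q[x]/(x^n-1)$. Since $f_c^*(x) = \sum_k c_k x^{n-k} = \sum_k c_k x^{-k}$, the product equals $\sum_{j,k} c_j c_k\, x^{j-k}$, whose $x^i$-coefficient is $d_i := \sum_k c_{k+i} c_k$. Multiplying by $f_b(x)$ then yields the $x^i$-coefficient
\begin{equation*}
\sum_m b_{i-m}\, d_m \;=\; \sum_{k,j} c_j c_k\, b_{i-(j-k)} \;=\; \sum_{j,k} c_j c_k\, b_{k+i-j},
\end{equation*}
where I substituted $j = k+m$. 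This is exactly the expression obtained above for $a_i$, establishing the congruence.

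The argument is essentially bookkeeping, and the only place where I expect to have to be careful is in correctly reading off the coefficient of $x^i$ in $f_c(x) f_c^*(x)$ modulo $x^n-1$, because the reciprocal $f_c^*$ shifts indices by $n-k$ rather than by $-k$ outright, and one must use $x^n = 1$ to equate these. Once that identification is made, the two double sums match term-for-term and the theorem follows immediately.
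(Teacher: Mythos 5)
Your computation is correct: the identity $a_i=\sum_{j,k}c_jc_k\,b_{k+i-j}$ follows from $\mathbb{F}_q$-bilinearity and Frobenius-invariance of the trace exactly as you say, and the coefficient of $x^i$ in $f_b(x)f_c(x)f_c^*(x)$ modulo $x^n-1$ is the same double sum after the substitution $j=k+m$. The route is genuinely different from the paper's, though. The paper never expands the trace coefficient-by-coefficient; instead it encodes $\alpha=\sum_i c_i\beta^{q^i}$ as a matrix identity with the circulant matrix $C=[c_0,\dots,c_{n-1}]$, multiplies the column of conjugates of $\alpha$ by its transpose, applies the trace entrywise to obtain the circulant identity $[a_0,\dots,a_{n-1}]=C\,[b_0,\dots,b_{n-1}]\,C^T$, and then passes to polynomials via the isomorphism $J_n\mapsto x$ (using that the minimal polynomial of the shift matrix $J_n$ is $x^n-1$ and that $C^T$ is the circulant of $f_c^*$). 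The matrix argument makes the appearance of the reciprocal polynomial structurally transparent (it is just transposition) and packages all the index bookkeeping into one ring isomorphism; your direct computation is more elementary, needs no facts about circulant matrices, and makes explicit exactly where the reduction $x^n\equiv 1$ is used, which is the one point you rightly flag as needing care. Both are complete proofs.
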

\begin{proof}
Let the circulant matrix over $\mathbb{F}_q$
\begin{eqnarray*}
C=[c_0, c_1, c_2, \dots , c_{n-1}]=\left(\begin{array}{ccccc}
  c_0 & c_1 & c_2 & \dots & c_{n-1} \\
  c_{n-1} & c_0 & c_1 & \dots & c_{n-2} \\
  c_{n-2} & c_{n-1} & c_0 & \dots & c_{n-3} \\
  \vdots & \vdots & \vdots & \vdots & \vdots \\
  c_1 & c_2 & c_3 & \dots & c_0 \\
\end{array}
\right),
\end{eqnarray*}
and denote by $C^T$ the transpose matrix of $C$. Since $\alpha=\sum\limits_{0\leq i\leq n-1}c_i\beta^{q^i},c_i\in
\mathbb{F}_q$, we have
\begin{eqnarray}\label{shu}
\left(
  \begin{array}{c}
    \alpha \\
    \alpha^{q} \\
    \vdots \\
    \alpha^{q^{n-1}} \\
  \end{array}
\right)= C \left(
\begin{array}{c}
    \beta \\
    \beta^{q} \\
    \vdots \\
    \beta^{q^{n-1}} \\
  \end{array}
\right).
\end{eqnarray}

%and
%\begin{eqnarray}\label{heng}
%\left( \alpha, \alpha^{q}, \alpha^{q^2},\dots,\alpha^{q^{n-1}}
%\right)= \left( \beta, \beta^{q},
%\beta^{q^2},\dots,\beta^{q^{n-1}}\right)  C^T,
%\end{eqnarray}

Multiplying (\ref{shu}) by its transpose
%%\begin{eqnarray}
$\left( \alpha, \alpha^{q}, \dots,\alpha^{q^{n-1}}
\right)= \left( \beta, \beta^{q},
\dots,\beta^{q^{n-1}}\right)  C^T,
$%%\end{eqnarray}
we obtain
\begin{equation}\label{jzcf}
\begin{array}{cc}
&\left(\begin{array}{ccccc}
  \alpha^{1+q^0} & \alpha^{1+q^1} & \alpha^{1+q^2} & \dots & \alpha^{1+q^{n-1}} \\
  \alpha^{q+q^0} & \alpha^{q+q^1} & \alpha^{q+q^2} & \dots & \alpha^{q+q^{n-1}} \\
% \alpha^{q^2+q^0} & \alpha^{q^2+q^1} & \alpha^{q^2+q^2} & \dots & \alpha^{q^2+q^{n-1}} \\
  \vdots & \vdots & \vdots & \vdots & \vdots \\
  \alpha^{q^{n-1}+q^0} & \alpha^{q^{n-1}+q^1} & \alpha^{q^{n-1}+q^2} & \dots & \alpha^{q^{n-1}+q^{n-1}} \\
\end{array}
\right) \\ &= C \left(\begin{array}{ccccc}
  \beta^{1+q^0} & \beta^{1+q^1} & \beta^{1+q^2} & \dots & \beta^{1+q^{n-1}} \\
  \beta^{q+q^0} & \beta^{q+q^1} & \beta^{q+q^2} & \dots & \beta^{q+q^{n-1}} \\
% \beta^{q^2+q^0} & \beta^{4+q^1} & \beta^{q^2+q^2} & \dots & \beta^{q^2+q^{n-1}} \\
  \vdots & \vdots & \vdots & \vdots & \vdots \\
  \beta^{q^{n-1}+q^0} & \beta^{q^{n-1}+q^1} & \beta^{q^{n-1}+q^2} & \dots & \beta^{q^{n-1}+q^{n-1}} \\
\end{array}
\right)  C^T.
\end{array}
\end{equation}
Taking the trace function $\mbox{Tr}_{q^n|q}$ on every elements of matrices in both sides of
(\ref{jzcf}), we have
\begin{equation}\label{jzcf1}
[a_0 ,a_1 ,a_2 ,\dots , a_{n-1}]
= C [b_0, b_1, b_2, \dots, b_{n-1}] C^T.
\end{equation}

Denote $J_n$ to be the following circulant matrix over $\mathbb{F}_q$,
\begin{eqnarray*}
J_n=[0 ,1 ,0 ,\dots , 0]=\left(\begin{array}{ccccc}
  0 & 1 & 0 & \dots & 0 \\
  0& 0 & 1 & \dots & 0 \\
  \vdots & \vdots & \vdots & \vdots & \vdots \\
  0& 0 & 0 & \dots & 1  \\
  1& 0 & 0 & \dots & 0  \\
\end{array}
\right).
\end{eqnarray*}

Then for a circulant matrix $A$ over
$\mathbb{F}_q$, we have
\begin{eqnarray*}
A=[a_0,\cdots,a_{n-1}] = a_0J_n^0+a_1J_n+a_2J_n^2+\cdots+a_{n-1}J_n^{n-1}.
\end{eqnarray*}

Let $f_a(x)=\sum\limits_{0\leq i\leq n-1}a_ix^i$,
$f_b(x)=\sum\limits_{0\leq i\leq n-1}b_ix^i$ and
$f_c(x)=\sum\limits_{0\leq i\leq n-1}c_ix^i$. Then (\ref{jzcf1})
can be rewritten as
$$f_a(J_n)= f_b(J_n) f_c(J_n) f_c^*(J_n).$$

Since the minimal polynomial of $J_n$ is $x^n-1=0$, by the above equation we can
deduce that
$$f_a(x)\equiv f_b(x) f_c(x) f_c^*(x)(\mbox{mod}\ x^n-1).$$
\end{proof}

\section{The Case $n=2^s\geq 4$}

Suppose $q=2$ in this section. For the following two sets we let
$n=2^s\geq 16$,
\begin{equation*}
G''=\left\{(z_3,\cdots,z_{n/2-1}) \in \mathbb{F}_2^{n/2-3}
\right\},
\end{equation*}

and
\begin{equation*}
H''=\left\{(y_3,\cdots,y_{n/2-1}) \in \mathbb{F}_2^{n/2-3}
\right\}.
\end{equation*}

\begin{lemma}
For $(z_3,\cdots,z_{n/2-1})\in G''$ and $n=2^s$ for $s\geq 4$, let
\begin{equation*}
\left\{
  \begin{array}{ll}
    y_j=h_j(z_3,\cdots,z_{n/2-1})=z_j+z_{j-1}, & 4\leq j\leq n/2-2\ and\ j\ even;\\
    y_3=h_3(z_3,\cdots,z_{n/2-1})=z_3+z_{(n-4)/2}, & j'=3; \\
    y_5=h_5(z_3,\cdots,z_{n/2-1})=z_5+z_4+z_{(n-6)/2}, & j'=5;\\
    y_{j'}=h_{j'}(z_3,\cdots,z_{n/2-1})\\
\ \ \ \ =z_{j'}+z_{j'-1}+z_{(n-1-j')/2}+z_{(j'-1)/2}, & 7\leq j\leq n/2-1\ and\ j'\ odd.
  \end{array}
\right.
\end{equation*}
Then $(h_3,h_4,\cdots,h_{n/2-1})$ defines a one-to-one map from $G''$ to $H''$ when $n=2^s$ for $s\geq 4$.
\end{lemma}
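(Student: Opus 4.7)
The plan is to exploit that $\phi := (h_3, h_4, \ldots, h_{n/2-1}) : G'' \to H''$ is an $\mathbb{F}_2$-linear map between two copies of $\mathbb{F}_2^{n/2-3}$. Because the domain and codomain have the same dimension, bijectivity is equivalent to injectivity, so I would show $\ker \phi = \{0\}$.

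First, I would eliminate the even-indexed variables. The relations $h_j = z_j + z_{j-1}$ for $j \in \{4, 6, \ldots, n/2-2\}$ give, in the kernel, $z_j = z_{j-1}$; this pairs consecutive odd-even indices. Setting $u_k := z_{2k+1} = z_{2k+2}$ for $1 \le k \le n/4-2$ and $v := z_{n/2-1}$, the $n/2-3$ original unknowns collapse to the $n/4-1$ parameters $u_1, \ldots, u_{n/4-2}, v$. It then suffices to show that the $n/4-1$ odd-indexed equations $h_3, h_5, \ldots, h_{n/2-1}$, rewritten in these parameters, form a nonsingular linear system over $\mathbb{F}_2$.

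The first two rewrite as $h_3 = u_1 + u_{n/4-2}$ and $h_5 = u_1 + u_2 + u_{n/4-2}$; summing them forces $u_2 = 0$ and ties $u_1$ to $u_{n/4-2}$. For $j' = 2i+1$ with $3 \le i \le n/4-1$, the equation $h_{j'} = z_{2i+1} + z_{2i} + z_{n/2-1-i} + z_i$ becomes $u_i + u_{i-1} + \tilde z_{n/2-1-i} + \tilde z_i$, where $\tilde z_m$ is identified with some $u_\ell$ (or with $v$ when $m = n/2-1$) according to the parity of $m$. Crucially, $v$ appears only in the last equation $h_{n/2-1}$. The plan is then to exhibit an elimination order making the coefficient matrix upper-triangular with unit diagonal: solve for $u_2$ from $h_3, h_5$; process $h_{2i+1}$ for increasing $i$ so each new equation introduces exactly one previously undetermined $u_k$; and recover $v$ from $h_{n/2-1}$ last.

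The main obstacle is the irregular index behaviour: the arguments $n/2-1-i$ and $i$ of $\tilde z$ move in opposite directions as $i$ increases, their parities flip, and accidental collisions over $\mathbb{F}_2$ can occur (for example, when $j' = (n+1)/3$ one has $(n-1-j')/2 = j'-1$, and the middle terms of $h_{j'}$ cancel). Organising these cases so that the triangular elimination works uniformly for every $s \ge 4$ requires careful case-splitting on the parity of $i$ and on the relative size of $n/2-1-i$ versus $i$. A convenient route is induction on $s$: verify the base case $s = 4$ (a $5 \times 5$ system that can be inverted by hand), and in the inductive step exploit the fact that the new equations introduced when passing from level $s$ to level $s+1$ preserve the triangular structure relative to those already in place.
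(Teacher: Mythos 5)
Your setup is sound and matches the paper's: the map is $\mathbb{F}_2$-linear between spaces of equal dimension $n/2-3$, so it suffices to show the kernel is trivial, and the even-indexed equations $h_j=z_j+z_{j-1}$ let you identify $z_{2k+1}$ with $z_{2k+2}$ and reduce to a square system in $u_1,\dots,u_{n/4-2},v$. The computations you do carry out ($h_3+h_5$ forcing $u_2=z_5=0$, the general shape $h_{2i+1}=z_{2i+1}+z_{2i}+z_{n/2-1-i}+z_i$) are correct and agree with the paper. But the proof stops exactly where the real work begins: you assert that "an elimination order making the coefficient matrix upper-triangular with unit diagonal" exists, explicitly flag the index collisions and parity flips as "the main obstacle," and then do not resolve them. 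That resolution is the entire content of the lemma. The paper handles it by a 2-adic propagation: from $h_{j-2}+h_j$ together with $h_{j-1}$, $h_{(j-1)/2}$, $h_{(n-j+1)/2}$ it derives $z_{j-3}=z_j$ for $j\equiv 1\pmod 4$, combines this with $z_5=0$ to kill all $z_i$ with $i\equiv 2\pmod 4$ (and their even partners), then feeds those zeros back into the odd equations to kill $i\equiv 4\pmod 8$, then $i\equiv 8\pmod{16}$, and so on; the indices $\{3,\dots,n/2-1\}$ are exhausted precisely because $n$ is a power of $2$. Nothing in your sketch substitutes for this cascade.

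The fallback you propose, induction on $s$, does not obviously work and I would not accept it as stated: the linear system for $n=2^{s+1}$ is not an extension of the one for $n=2^s$ by new rows, because every odd equation contains the term $z_{(n-1-j')/2}$ whose index depends on $n$; when $n$ doubles, \emph{all} the existing equations change, so there is no "triangular structure already in place" to preserve. The base case $s=4$ is checkable by hand, but the inductive step needs an argument you have not supplied, and the natural one is the non-inductive 2-adic propagation above. As it stands the proposal is a correct reduction plus an unproved claim of nonsingularity, i.e., it has a genuine gap at the decisive step.
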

\begin{proof}
For $(z_3,\cdots,z_{n/2-1})\in G''$ and $n=2^s$ for $s\geq 4$, let
\begin{equation}\label{zu0}
\left\{
\begin{array}{ll}
h_3(z_3,\cdots,z_{n/2-1})&=0,\\
\ \ \ \ \vdots\\
h_{n/2-1}(z_3,\cdots,z_{n/2-1})&=0.
\end{array}
\right.
\end{equation}
It suffices to prove the above boolean system (\ref{zu0}) has only
zero solution $(0,0,\cdots,0)\in \mathbb{F}_2^{n/2-3}$.

Firstly, for odd $j=4k+1$ where $9\leq j\leq n/2-1$, and
$j-1,(j-1)/2,(n-j+1)/2$ are even. Thus by (\ref{zu0}),
\begin{equation}
\begin{array}{ll}
h_{j-1}&=z_{j-2}+z_{j-1}=0,\\
h_{(j-1)/2}&=z_{(j-1)/2}+z_{(j-3)/2}=0,\\
h_{(n-j+1)/2}&=z_{(n-j+1)/2}+z_{(n-j-1)/2}=0.
\end{array}
\end{equation}
Hence
\begin{equation}
\begin{array}{ll}
&h_{j-2}+h_{j}\\
&=(z_{j-2}+z_{j-1}+z_{j-3}+z_j)+(z_{(j-1)/2}+z_{(n-j-1)/2}+z_{(j-3)/2}+z_{(n-j+1)/2})\\
&=(h_{j-1}+z_{j-3}+z_j)+(h_{(j-1)/2}+h_{(n-j+1)/2})\\
&=z_{j-3}+z_j.
\end{array}
\end{equation}

So $z_{j-3}=z_j$ for $j=4k+1\geq 9$.

By $h_3=h_5=0$ , $h_4=z_3+z_4=0$ and
$h_{(n-4)/2}=z_{(n-4)/2}+z_{(n-6)/2}=0$, we have
$h_3+h_5=z_3+z_{(n-4)/2}+z_5+z_4+z_{(n-6)/2}=h_4+h_{(n-4)/2}+z_5=0$.
So $z_5=0$.

By $h_{n/2-1}=z_{n/4}+z_{n/4-1}+z_{n/2-1}+z_{n/2-2}=0$ and
$h_{n/4}=z_{n/4}+z_{n/4-1}=0$. we have $z_{n/2-1}=z_{n/2-2}$.

So
\begin{equation*}
\left\{
\begin{array}{ll}
z_5&=0,\\
h_{2j}&=z_{2j-1}+z_{2j}=0,\ \ \ for \ all\ 4\leq 2j\leq n/2-1 \\
z_{j-3}&=z_j.\ \ \ for \ all\ 9\leq j=4k+1\leq n/2-1
\end{array}
\right.
\end{equation*}

Thus $z_6=0$ since $z_5=0$ and $h_6=z_5+z_6=0$. By $z_6=0$ we have
$z_9=z_6=0$ since $z_{j-3}=z_j \ for \ all\ 9\leq j=4k+1\leq
n/2-1$. Thus $z_{10}=0$ since $z_9=0$ and $h_{10}=z_{10}+z_9=0$,
and so on. By this procedure we deduce that
\begin{equation}\label{zu00}
\left\{
\begin{array}{ll}
&z_{i}=z_{i-1}=0,\ \ \ i\equiv 2\pmod{4},\ 3\leq i\leq n/2-1, \\
&z_{n/2-1}=0.
\end{array}
\right.
\end{equation}

Now let $j=4k+1$ and $k\equiv 1\pmod{2}$. Then $j-1\equiv
4\pmod{8}$, and $j\equiv 1\pmod{4}$, $(j-1)/2\equiv
2\pmod{4}$, $(n-j-1)/2\equiv 1\pmod{4}$. Thus for
$j>5$, $z_j=z_{(j-1)/2}=z_{(n-j-1)/2}=0$ by (\ref{zu00}), and
$h_{j}=z_j+z_{j-1}+z_{(j-1)/2}+z_{(n-j-1)/2}=0$, which gives
$z_{j-1}=z_{4k}=0$ for $k>1$. In the case of $j=5$,
$z_5=z_{(n-5-1)/2}=0$ by (\ref{zu00}), and
$h_5=z_5+z_4+z_{(n-5-1)/2}=0$, so similarly $z_4=0$. Generally by
$h_{4k}=z_{4k}+z_{4k-1}=0$, we have
\begin{equation}\label{zu01}
z_{i}=z_{i-1}=0,\ \ \ i\equiv 4\pmod{8},\ 3\leq i\leq n/2-1.
\end{equation}

Assume $j=4k+1$ and $k\equiv 2\pmod{4}$. Similarly by
considering
$h_{j}=z_j+z_{j-1}+z_{(j-1)/2}+z_{(n-j-1)/2}=z_{j-1}=0$, we can
get
\begin{equation}\label{zu02}
z_{i}=z_{i-1}=0,\ \ \ \ i\equiv 8\pmod{16},\ 3\leq i\leq n/2-1.
\end{equation}

Generally when $j=4k+1$ and $k\equiv 2^{t}\pmod{2^{t+1}}$,
where $2^t\leq n/16$, i.e. $0\leq t\leq s-4$, we have
\begin{equation}\label{zu03}
z_{i}=z_{i-1}=0,\ \ \ i\equiv 2^{t+2}\pmod{2^{t+3}},\ 3\leq i\leq n/2-1.
\end{equation}

By (\ref{zu00}),(\ref{zu01}),(\ref{zu02}),(\ref{zu03}) and
$$\{z_3,z_4,\cdots,z_{n/2-2}\}=\bigcup\limits_{t=1}\limits^{s-2} \bigcup \limits_{3\leq i\equiv 2^t (\mbox{mod}\ 2^{t+1})\leq n/2-1, }{\{z_i,z_{i-1}\}},$$
we know that the only solution of the boolean system (\ref{zu0})
is
$$\{z_3,z_4,\cdots,z_{n/2-2},z_{n/2-1}\}=(0,0,\cdots,0)\in \mathbb{F}_2^{n/2-3}.$$

\end{proof}

\begin{theorem}\label{main2}
Let $h(z)=\sum\limits_{0\leq i\leq n-1}a_iz^i \in
\mathbb{F}_2[z]/(z^n-1), n=2^s$ for $s\geq 2$, and $a_0=1$. Then
$h(z)=g(z)\cdot g(z)^*$ for some $g(z)=\sum\limits_{0\leq i\leq
n-1}b_iz^i \in \mathbb{F}_2[z]/(z^n-1)$ if and only if $h(z)\in
H$, where
\begin{equation}
H=\left\{h(z)=\sum\limits_{0\leq i\leq n-1}a_iz^i \in \mathbb{F}_2[z]/(z^n-1)
\left|
\begin{array}{ll}
&\sum\limits_{0\leq i\leq n/2-1, (i,2)=1}a_i=0, \\
&\ a_0=1,a_{n/2}=0,\\
&\ a_i=a_{n-i}\ for\ all\ 1\leq i\leq n/2-1.
\end{array}
\right.
\right\}
\end{equation}

Furthermore, every $h(z)\in H$ has a unique factorization
$h(z)=g(z)\cdot g^*(z)$ for some $g(z)\in G$,  where
\begin{equation}
G=\left\{g(z)=\sum\limits_{0\leq i\leq n-1}b_iz^i \in \mathbb{F}_2[z]/(z^n-1)
\left|
\begin{array}{ll}
&b_0=1,b_{n-1}=0,\\
&b_2=b_{n-3}=0,\\
&b_i=b_{n-1-i}\ for\ all\ i=1, 3, 4, \cdots, n/2-1.
\end{array}
\right.
\right\}
\end{equation}
\end{theorem}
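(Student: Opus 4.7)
The plan is to split the proof into (i) necessity ($h = gg^*$ implies $h \in H$) and (ii) sufficiency together with the uniqueness of the factorization in $G$. Throughout I will exploit the identity
$$g^*(z) = z g(z) + (1+z) \quad \text{in } \mathbb{F}_2[z]/(z^n-1),$$
which by a straightforward coefficient comparison is equivalent to the symmetries $b_i = b_{n-1-i}$ for $i \ge 1$ together with $b_0 + b_{n-1} = 1$; imposing the extra conditions $b_0 = 1$ and $b_2 = 0$ then characterizes $G$. Multiplying this identity by $g$ yields the closed form
$$h = g g^* = z g^2 + (1+z)\,g,$$
from which every coefficient equation in the rest of the proof will be read off.

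For necessity, I would first dispose of the easy conditions: $h^* = h$ is $(gg^*)^* = g^* g$, and $a_{n/2} = 0$ follows in characteristic $2$ by pairing $j \leftrightarrow j+n/2$ in $a_{n/2} = \sum_j b_j b_{j+n/2}$. The delicate condition is $\sum_{i\ \mathrm{odd},\ 1 \le i \le n/2-1} a_i = 0$. To extract it, I would pass to the local ring $\mathbb{F}_2[w]/(w^n)$ via $w = z+1$: setting $G(w) := g(1+w)$, one has $h(1+w) = G(w)\,G(w/(1+w))$. A short expansion modulo $w^3$ gives $[h(1+w)]_{w^2} = c_1(c_0+c_1)$, where $c_j = [G]_{w^j}$, and this equals $0$ in $\mathbb{F}_2$ once $c_0 = g(1) = a_0 = 1$ (both $c_1 = 0$ and $c_1 = 1$ kill the expression). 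On the other hand, expanding $h(1+w) = \sum_i a_i (1+w)^i$ via the binomial theorem and using Lucas' congruence $\binom{i}{2} \equiv 1 \pmod 2 \iff i \equiv 2, 3 \pmod 4$, the same coefficient equals $\sum_{i \equiv 2,3\,(\mathrm{mod}\,4)} a_i$. The symmetry $a_i = a_{n-i}$ then collapses the $i \equiv 2 \pmod 4$ half to $0$ (using $n/2 \equiv 0 \pmod 4$, valid for $s \ge 3$), and matches the $i \equiv 3 \pmod 4$ half exactly with the desired half-sum. The small case $s = 2$ I will handle directly, since for $n=4$ only $h = 1$ can occur as a norm with $a_0 = 1$.

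For sufficiency and uniqueness, I would solve the coefficient system produced by $h = zg^2 + (1+z)g$. Using $b_i = b_{n-1-i}$ to fold indices above $n/2$ back, one finds $a_j = b_j + b_{j-1}$ for even $j$ and $a_j = b_j + b_{j-1} + b_{(j-1)/2} + b_{(n-1-j)/2}$ for odd $j$; the defining $b_2 = 0$ forces $b_1 = a_2$. Setting $z_i := b_i$ for $3 \le i \le n/2-1$ and $y_j := a_j$ (with the exception $y_3 := a_2 + a_3$), the system for $(z_3, \dots, z_{n/2-1})$ becomes, after a direct index match, precisely the linear system whose bijectivity is asserted by the preceding Lemma (valid for $s \ge 4$). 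That Lemma then delivers $b_3, \dots, b_{n/2-1}$ uniquely. The two small cases $s=2$ and $s=3$, where $|G|$ equals $1$ and $4$ respectively, are dispatched by direct enumeration.

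The main obstacle is the consistency of the ``leftover'' equation $a_1 = b_1 + b_{n/2-1}$, since the Lemma only sees the equations indexed by $j \in [3, n/2-1]$. I expect this check to be exactly where the $H$-hypothesis $\sum_{i\ \mathrm{odd}} a_i = 0$ is consumed: it forces $a_1 = \sum_{i\ \mathrm{odd},\ 3 \le i \le n/2-1} a_i$, which should match the value of $b_1 + b_{n/2-1}$ produced by the Lemma combined with $b_1 = a_2$. The cleanest way to finish, and to dispense with the bookkeeping altogether, is a counting shortcut: the defining constraints give $|G| = |H| = 2^{n/2-2}$, so once $g \mapsto gg^*$ is shown to be a well-defined map $G \to H$ (by necessity) and injective (immediate from the Lemma), it is automatically bijective, which simultaneously yields the ``if'' direction and the uniqueness clause.
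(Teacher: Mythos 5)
Your proposal is correct and shares the paper's overall architecture --- both proofs reduce the problem to a linear system on $G$, invoke Lemma 3.1 for injectivity, and close with the counting argument $|G|=|H|=2^{n/2-2}$ (the paper uses exactly this shortcut, so your worry about the ``leftover'' equation $a_1=b_1+b_{n/2-1}$ is moot there too) --- but your two central computations are carried out by genuinely different and cleaner means. First, the paper derives the linearization of $h_j(z_0,\dots,z_{n-1})=\sum_i z_iz_{i+j}$ on $G'$ by a laborious case-by-case pairing of the $n$ quadratic terms (even $j$, odd $j$, the self-paired indices $i=(n-1-j)/2$ and $i=n-(j+1)/2$, and the exceptional indices $i=0,n-j$); your ring identity $g^*=zg+(1+z)$ on $G$, giving $h=zg^2+(1+z)g$ with $g^2(z)=g(z^2)$, reads off the same formulas $a_j=b_j+b_{j-1}$ (even $j$) and $a_j=b_j+b_{j-1}+b_{(j-1)/2}+b_{(n-1-j)/2}$ (odd $j$) in one line and makes the linearity structurally obvious rather than a happy accident of cancellation. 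Second, for the necessity of $\sum_{i\ \mathrm{odd},\,i\le n/2-1}a_i=0$ the paper uses the parity identity $\bigl(\sum_{i\ \mathrm{even}}b_i\bigr)\bigl(\sum_{i\ \mathrm{odd}}b_i\bigr)=0$ (forced by $\sum b_i=1$) and expands this product as $h_1+h_3+\cdots+h_{n/2-1}$; your route through $\mathbb{F}_2[w]/(w^n)$ with $w=z+1$, the coefficient $[h(1+w)]_{w^2}=c_1(c_0+c_1)=0$, and Lucas' theorem is an independent argument that works for arbitrary $g$, not just $g\in G$, which is what necessity requires. Two small points to tighten: the chain ``$c_0=g(1)=a_0=1$'' is not literally right --- what you have is $g(1)^2=h(1)=\sum_i a_i$, which equals $a_0=1$ only after invoking the already-established symmetry $a_i=a_{n-i}$ and $a_{n/2}=0$; and for the map $G\to H$ to be well defined you should note explicitly that $g\in G$ forces $\sum_i b_i=1$ (the off-center coefficients pair up under $b_i=b_{n-1-i}$ since $n-1$ is odd), so that $a_0=1$ and your necessity argument applies. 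Neither is a gap, and your approach, if anything, is easier to verify than the paper's.
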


\begin{proof}
Firstly it is easy to deduce that the condition that $h(z)=g(z) g^*(z)$ for some $g(z)\in \mathbb{F}_2[z]/(z^n-1)$ is equivalent to the following equation system having a solution:
\begin{equation} \label{zu1}
\left\{
\begin{array}{ll}
h_0(z_0,\cdots,z_{n-1})&=z_0+ \cdots + z_{n-1} =a_0=1,\\
h_1(z_0,\cdots,z_{n-1})&=z_0z_1+z_1z_2+z_2z_3+ \cdots + z_{n-1}z_0 =a_{n-1},\\
%h_2(z_0,\cdots,z_{n-1})&=z_0z_2+z_1z_3+z_2z_4+ \cdots + z_{n-1}z_1 =a_{n-2},\\
&\vdots\\
h_{n/2}(z_0,\cdots,z_{n-1})&=z_0z_{n/2}+z_1z_{n/2+1}+ \cdots + z_{n/2-1}z_{n-1}\\
&+z_0z_{n/2}+z_1z_{n/2+1}+ \cdots + z_{n/2-1}z_{n-1} =a_{n/2},\\
&\vdots\\
h_{n-1}(z_0,\cdots,z_{n-1})&=z_0z_{n-1}+z_1z_0+z_2z_1+ \cdots + z_{n-1}z_{n-2} =a_{1}.
\end{array}
\right.
\end{equation}

In the above equation system, it is obvious that for $1\leq j\leq n/2-1$,
\begin{equation*}
\begin{array}{ll}
&h_j(z_0,\cdots,z_{n-1})=a_{n-j}\\
&=z_0z_j+z_1z_{1+j}+z_2z_{2+j}+ \cdots + z_{n-1}z_{j-1} \\
&=z_0z_{n-j}+z_1z_{n-j+1}+\cdots +z_{n-1}z_{n-j-1}\\
&=a_j=h_{n-j}(z_0,\cdots,z_{n-1}).
\end{array}
\end{equation*}

And for $i=n/2$,
\begin{equation*}
\begin{array}{ll}
&h_{n/2}(z_0,\cdots,z_{n-1})=a_{n/2}\\
&=z_0z_{n/2}+z_1z_{n/2+1}+ \cdots + z_{n/2-1}z_{n-1}+z_0z_{n/2}+z_1z_{n/2+1}+ \cdots + z_{n/2-1}z_{n-1} \\
&=0.
\end{array}
\end{equation*}

Thus a necessary condition for $h(z)=g(z) g^*(z)$ is $a_t=a_{n-t}$ for all $1\leq t\leq n/2-1$, and $a_{n/2}=0$. Thus (\ref{zu1}) is equivalent to
\begin{equation} \label{zu2}
\left\{
\begin{array}{ll}
h_0(z_0,\cdots,z_{n-1})&=z_0+ \cdots + z_{n-1} =1,\\
h_1(z_0,\cdots,z_{n-1})&=z_0z_1+z_1z_2+z_2z_3+ \cdots + z_{n-1}z_0 =a_1,\\
%h_2(z_0,\cdots,z_{n-1})&=z_0z_2+z_1z_3+z_2z_4+ \cdots + z_{n-1}z_1 =a_2,\\
&\vdots\\
h_{n/2-1}(z_0,\cdots,z_{n/2-1})&=z_0z_{n/2-1}+z_1z_{n/2}+ \cdots + z_{n-1}z_{n/2-2} =a_{n/2-1}.
\end{array}
\right.
\end{equation}

(Necessity) It is left to prove the necessary condition that  $\sum\limits_{0\leq i\leq n/2-1, (i,2)=1}a_i=0$.

If $h(z)=g(z) g^*(z)$ for some
$g(z)=\sum\limits_{0\leq i\leq n-1}b_iz^i \in \mathbb{F}_2[z]/(z^n-1)$, then  $(b_0,b_1,\cdots,b_{n-1})\in \mathbb{F}_2^n$ is a solution of
the equation system (\ref{zu2}).

 Since $h_0(b_0,\cdots,b_{n-1})=b_0+ \cdots + b_{n-1} =\sum\limits_{(i,2)>1}b_i+\sum\limits_{(i,2)=1}b_i=1$ and $4|2^s=n$, $$(\sum\limits_{(i,2)>1}b_i)\cdot (\sum\limits_{(i,2)=1}b_i)=0.$$ So we have
\begin{equation*}
\begin{array}{ll}
&(\sum\limits_{(i,2)>1}b_i)\cdot (\sum\limits_{(i,2)=1}b_i)\\
&=\sum\limits_{(j,2)=1,1\leq j\leq n/2-1}\sum\limits_{0\leq i\leq n-1}b_ib_{i+j}\\
&=b_0b_1+b_1b_2+b_2b_3+ \cdots + b_{n-1}b_0\\
&\ \ +b_0b_3+b_1b_4+b_2b_5+ \cdots + b_{n-1}b_2\\
&\ \ \ \vdots\\
&\ \ +b_0b_{n/2-1}+b_1b_{n/2}+b_2b_4+ \cdots + b_{n-1}b_{n/2-2}\\
&=h_1(b_0,\cdots,b_{n-1})+h_3(b_0,\cdots,b_{n-1})+\cdots+h_{n/2-1}(b_0,\cdots,b_{n-1})\\
&=a_1+a_3+a_5+\cdots+a_{n/2-1}\\
&=\sum\limits_{0\leq i\leq n/2-1, (i,2)=1}a_i=0.
\end{array}
\end{equation*}

(Sufficiency) It is easy to verify the sufficiency for two cases
$s=2,3$, i.e. $n=4,8$. So we assume $s\geq 4$ in the following. It
is obvious that the sets $H$ and $G$ all have $2^{n/2-2}$
elements. Thus it is sufficient to prove the following map
$$P:\ P(g(z))=g(z)\cdot g(z)^*\ for\ g(z)\in G$$
is an injective map from $G$ to $H$.

By (\ref{zu1}), $g\cdot g^*$ is equivalent to a boolean equation system. So we should prove the map
$P':P'(z_0,z_1,\cdots,z_{n-1})=(h_0(z_0,\cdots,z_{n-1}),\cdots,h_{n-1}(z_0,\cdots,z_{n-1}))$ is an injective map from $G'$ to $H'$, where
$$h_j(z_0,\cdots,z_{n-1})=\sum\limits_{0\leq i\leq n-1}z_iz_{i+j}, \ 0\leq j\leq n-1.$$

\noindent And

\begin{equation*}
G'=\left\{(z_0,\cdots,z_{n-1}) \in \mathbb{F}_2^n
\left|
\begin{array}{ll}
&z_0=1,z_{n-1}=0,\\
&z_2=z_{n-3}=0,\\
&z_i=z_{n-1-i}\ for\ all\ 1\leq i\neq 2\leq n/2-1.
\end{array}
\right.
\right\}
\end{equation*}

and

\begin{equation*}
H'=\left\{(y_0,\cdots,y_{n-1}) \in \mathbb{F}_2^n
\left|
\begin{array}{ll}
&\sum\limits_{0\leq i\leq n/2-1, (i,2)=1}y_i=0, \\
&\ y_0=1,y_{n/2}=0,\\
&\ y_i=y_{n-i}\ for\ all\ 1\leq i\leq n/2-1.
\end{array}
\right.
\right\}
\end{equation*}

Now we will prove the quadratic boolean system (\ref{zu1}) is in fact linear for $(z_0,\cdots,z_{n-1})\in G'$. Firstly we discuss $j\in [1,n/2-1]$ in two cases.

1) For every $1\leq j\leq n/2-1$ and $(j,2)>1$, the $n$ items $z_iz_{i+j}, 0\leq i\leq n-1$ in $h_j(z_0,\cdots,z_{n-1})=\sum\limits_{0\leq i\leq n-1}z_iz_{i+j}$ can be separated into $n/2$ pairs. Every pairs are
$$z_iz_{i+j},\ \ \mbox{and}\ \ z_{n-i-1-j}z_{n-i-1},\ 0\leq i\leq (n-j-1)/2, \mbox{or}\ i\geq n-(j+1)/2.$$

Since $(j,2)>1$, the two items $z_iz_{i+j}$ and  $z_{n-1-i-j}z_{n-1-i}$ are different for all $0\leq i\leq (n-j-1)/2$, and $i\geq n-(j+1)/2$. Otherwise $i\equiv n-i-1-j\pmod{n}$. So
the congruence equation $2i\equiv n-1-j\pmod{n}$ has  solutions, which is impossible for $(j,2)>1$.

Because $(z_0,\cdots,z_{n-1})\in G'$, we have $z_i=z_{n-i-1}$ and $z_{i+j}=z_{n-1-i-j}$ for $i\neq 0$. When $i=0$,$z_0=1\neq z_{n-1}=0$, the item pairs $z_0z_j=z_j$ and $z_{n-1-j}z_{n-1}=0$. When $i=n-j$, the item pairs $z_iz_{i+j}=z_{n-j}z_0=z_{n-j}=z_{j-1}$ and $z_{j-1}z_{n-1}=0$. So in the even $j$ case,
\begin{equation*}
\begin{array}{ll}
h_j(z_0,\cdots,z_{n-1})&=\sum\limits_{0\leq i\leq n-1}z_iz_{i+j}\\
&=\sum\limits_{0\leq i\leq (n-j-1)/2,i\geq n-(j+1)/2,i\neq 0,n-j}(z_iz_{i+j}+z_{n-i-1-j}z_{n-i-1})\\
&\ \ + (z_0z_j+z_{n-1-j}z_{n-1}) +(z_{n-j}z_0+z_{n-1-(n-j)}z_{n-1})\\
&=(z_0z_j+z_{n-1-j}z_{n-1}) +(z_{n-j}z_0+z_{j-1}z_{n-1})\\
&=z_j+z_{j-1}.
\end{array}
\end{equation*}

2) For every $1\leq j\leq n/2-1$ and $(j,2)=1$, there are two solutions $i=(n-1-j)/2,n-(j+1)/2$ for the congruence equation $i\equiv n-i-1-j\pmod{n}$. Thus the item $z_iz_{i+j}$ is equal to $z_{n-i-1-j}z_{n-i-1}$ for $i=(n-1-j)/2,n-(j+1)/2$. In these two cases, $z_iz_{i+j}=z_{(n-1-j)/2}$ when $i=(n-1-j)/2$, and $z_iz_{i+j}=z_{n-(1+j)/2}=z_{(1+j)/2-1}=z_{(j-1)/2}$ when $i=n-(1+j)/2$.

And for $0\leq i< (n-j-1)/2,i> n-(j+1)/2, i\neq 0,n-j$, the item pair $z_iz_{i+j}$ and  $z_{n-i-1-j}z_{n-i-1}$ are different. In these cases $z_iz_{i+j}+z_{n-i-1-j}z_{n-i-1}=0$.

Similarly, when $i=0$, the item pairs $z_0z_j=z_j$ and $z_{n-1-j}z_{n-1}=0$. When $i=n-j$, the item pairs $z_iz_{i+j}=z_{n-j}z_0=z_{n-j}=z_{j-1}$ and $z_{j-1}z_{n-1}=0$.

So the $n$ items $z_iz_{i+j}, 0\leq i\leq n-1$ in $h_j(z_0,\cdots,z_{n-1})=\sum\limits_{0\leq i\leq n-1}z_iz_{i+j}$ can be separated into $(n-2)/2+1$ pairs. Therefore for odd $j$,

\begin{equation*}
\begin{array}{ll}
h_j(z_0,\cdots,z_{n-1})
&=\sum\limits_{0\leq i\leq n-1}z_iz_{i+j}\\
&=\sum\limits_{0\leq i< (n-j-1)/2, i> n-(j+1)/2, i\neq 0,n-j }(z_iz_{i+j}+z_{n-i-1-j}z_{n-i-1})\\
&\ \ + z_{(n-1-j)/2}z_{(n+j-1)/2}+ z_{n-(1+j)/2}z_{(j-1)/2}\\
&\ \ + (z_0z_j+z_{n-1-j}z_{n-1}) +(z_{n-j}z_0+z_{n-1-(n-j)}z_{n-1})\\
&=\left\{
\begin{array}{ll}
z_{(n-1-j)/2}z_{(n-1-j)/2}+z_{(j-1)/2}z_{(j-1)/2}\ \ &j\neq 1\\
\ \ +(z_0z_j+z_{n-1-j}z_{n-1}) +(z_{n-j}z_0+z_{j-1}z_{n-1}) \\
z_{(n-1-j)/2}z_{(n-1-j)/2}+z_{n-1}z_0\ \ &j= 1\\
\ \ +(z_0z_j+z_{n-1-j}z_{n-1}) +(z_{n-1}z_0+z_{j-1}z_{n-1}) \\
\end{array}
\right.\\

&=\left\{
\begin{array}{ll}
z_{(n-1-j)/2}+z_{(j-1)/2}+z_j+z_{j-1}\ \ \ &j\neq 1\\
z_{(n-1-j)/2}+z_j\ \ \  &j=1.
\end{array}
\right.
\end{array}
\end{equation*}

3) For $j=0$, we know that if $(z_0,\cdots,z_{n-1})\in G'$, $z_i+z_{n-1-i}=0$ for $1\leq i\leq n/2-1$ and $z_0+z_{n-1}=1$, so
$h_0(z_0,\cdots,z_{n-1})=z_0+\cdots+z_{n-1}=(z_0+z_{n-1})+\sum\limits_{1\leq i\leq n/2-1}(z_i+z_{n-1-i})=1$.

4) For $n/2\leq j\leq n-1$, it is easy to see that $h_{n/2}(z_0,\cdots,z_{n-1})=0$ and $h_j(z_0,\cdots,z_{n-1})=h_{n-j}(z_0,\cdots,z_{n-1})$.

To sum up, for $(z_0,\cdots,z_{n-1})\in G'$ we have
\begin{equation}\label{zu3}
\left\{
\begin{array}{ll}
y_0&=h_0(z_0,\cdots,z_{n-1})=1,\\
y_1&=h_1(z_0,\cdots,z_{n-1})=z_{(n-2)/2}+z_1,\\
y_2&=h_2(z_0,\cdots,z_{n-1})=z_2+z_1=z_1,\\
y_3&=h_3(z_0,\cdots,z_{n-1})=z_3+z_2+z_{(n-4)/2}+z_1=z_1+z_3+z_{(n-4)/2},\\
y_4&=h_4(z_0,\cdots,z_{n-1})=z_4+z_3,\\
y_5&=h_5(z_0,\cdots,z_{n-1})=z_5+z_4+z_{(n-6)/2}+z_2=z_5+z_4+z_{(n-6)/2},\\
y_6&=h_6(z_0,\cdots,z_{n-1})=z_6+z_5,\\
y_7&=h_7(z_0,\cdots,z_{n-1})=z_7+z_6+z_{(n-8)/2}+z_3,\\
&\vdots\\
y_j&=h_j(z_0,\cdots,z_{n-1})=z_j+z_{j-1}, j>2\ even,\\
&\vdots\\
y_j'&=h_j'(z_0,\cdots,z_{n-1})=z_{j'}+z_{j'-1}+z_{(n-1-j')/2}+z_{(j'-1)/2}, j'>5\ odd,\\
&\vdots\\
y_{n/2-1}&=h_{n/2-1}(z_0,\cdots,z_{n-1})=z_{n/4}+z_{n/4-1}+z_{n/2-1}+z_{n/2-2},\\
y_{n/2}&=h_{n/2-1}(z_0,\cdots,z_{n-1})=0,\\
y_{n/2+1}&=h_{n/2+1}(z_0,\cdots,z_{n-1})=y_{n/2-1},\\
&\vdots\\
y_{n/2+j}&=h_{n/2+j}(z_0,\cdots,z_{n-1})=y_{n/2-j},\\
&\vdots\\
y_{n-1}&=h_{n-1}(z_0,\cdots,z_{n-1})=y_{1}.\\
\end{array}
\right.
\end{equation}

By (\ref{zu3}), we get
\begin{equation}\label{y1}
\begin{array}{ll}
&\sum\limits_{1\leq j\leq n/2-1,(j,2)=1}y_j\\
&=h_1+h_3+\cdots+h_{n/2-1}\\
&=\sum\limits_{1\leq i\leq n/2-1,i\neq 2}z_i+\sum\limits_{1\leq i\leq n/2-1,i\neq 2}z_i\\
&=0.
\end{array}
\end{equation}

So we have proved that for all $(z_0,\cdots,z_{n-1})\in G': \ P'(z_0,\cdots,z_{n-1})\in H'$, this means that for all $g(z)\in G: \ P(g(z))\in H$.

On the other hand, by (\ref{zu3}), (\ref{y1}),  and Lemma 3.1,
$P(g(z))s$ are different for different $g(z)s\in
G$.%%$y_1+y_3=z_3+z_{(n-4)/2}$

So we have proved that $P(G)=H$, and every $h$ have a unique factorization $h=g\cdot g^*$ for some $g\in G$.
\end{proof}

\begin{remark}
It should be noted that the factorization is not unique if $g(z)\notin G$. Also if $n\neq 2^s$, then there is possible no factorization for $h(z)\in H$.
\end{remark}

\begin{lemma}\label{mult1}
Let $n$ be even, $f_a(x)=\sum\limits_{0\leq i\leq
n-1}a_ix^i\in\mathbb{F}_2[x]/(x^n-1)$ and
$f_b(x)=\sum\limits_{0\leq i\leq
n-1}b_ix^i\in\mathbb{F}_2[x]/(x^n-1)$ are symmetric polynomials
with $a_0=b_0=1$, $a_{n/2}=b_{n/2}=0$. Suppose $f_c(x)=f_a(x)
f_b(x)=\sum\limits_{0\leq i\leq
n-1}c_ix^i\in\mathbb{F}_2[x]/(x^n-1)$. Then $c_0=1,c_{n/2}=0$, and
$$\sum\limits_{1\leq i\leq n/2-1, (i,2)=1}c_i=\sum\limits_{1\leq
i\leq n/2-1, (i,2)=1}(a_i+b_i).$$
\end{lemma}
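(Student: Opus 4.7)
The assertions $c_0 = 1$ and $c_{n/2} = 0$ are direct. Writing $c_k = \sum_i a_i b_{k-i}$ with indices modulo $n$, I pair each term with its partner under the involution $i \leftrightarrow n-i$ and use the symmetry $b_j = b_{n-j}$. For $c_0$, every non-fixed pair contributes $2 a_i b_i = 0$; the two fixed points $i = 0$ and $i = n/2$ contribute $a_0 b_0 = 1$ and $a_{n/2} b_{n/2} = 0$, giving $c_0 = 1$. The same pairing handles $c_{n/2}$: non-fixed pairs cancel in characteristic $2$, and both fixed-point contributions $a_0 b_{n/2}$ and $a_{n/2} b_0$ vanish by hypothesis.

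For the main identity, let $\sigma : \mathbb{F}_2[x]/(x^n-1) \to \mathbb{F}_2$ be the $\mathbb{F}_2$-linear functional $\sigma(f) = \sum_{1 \leq i \leq n/2-1,\, i \text{ odd}} f_i$. The desired equality becomes $\sigma(f_a f_b) = \sigma(f_a) + \sigma(f_b)$. Setting $g_a = f_a + 1$ and $g_b = f_b + 1$ (both symmetric, with zero constant term and zero middle term) and using $\sigma(1) = 0$, this is equivalent to the cleaner statement $\sigma(g_a g_b) = 0$.

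I would then expand $g_a, g_b$ in the ``symmetric basis'' $P_k := x^k + x^{-k}$ (with $x^{-k} = x^{n-k}$), so that $g_a = \sum_{k=1}^{n/2-1} a_k P_k$ and analogously for $g_b$. The multiplication rule $P_i P_j = P_{i+j} + P_{i-j}$, immediate from $(x^i + x^{-i})(x^j + x^{-j})$, yields
\begin{equation*}
g_a g_b = \sum_{i,j=1}^{n/2-1} a_i b_j \bigl(P_{i+j} + P_{i-j}\bigr).
\end{equation*}
Since $P_0 = P_{n/2} = 0$ in characteristic $2$, a short case check shows that $\sigma(P_k) = 1$ if $k$ is odd and $0$ otherwise, for every $k \in \mathbb{Z}/n\mathbb{Z}$. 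Here it is essential that $n/2$ is even (which holds in the Section~3 application $n = 2^s \geq 4$), so that an odd $k$ is automatically distinct from $0$ and from $n/2$ modulo $n$. Because $(i+j) - (i-j) = 2j$ is even, $i+j$ and $i-j$ always share parity, whence $\sigma(P_{i+j}) + \sigma(P_{i-j}) = 0$ for every pair $(i,j)$, and therefore $\sigma(g_a g_b) = 0$.

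The main obstacle is spotting the right reformulation $\sigma(g_a g_b) = 0$ together with the basis $\{P_k\}$; once these are in place, the whole statement collapses to the trivial parity observation that $i + j$ and $i - j$ always have the same parity.
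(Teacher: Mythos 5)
Your proof is correct, and it takes a genuinely different route from the paper's. The paper works directly on coefficients: it notes that an odd-index coefficient $c_i$ only collects products $a_jb_k$ with $j\not\equiv k\pmod 2$, and then evaluates the half-range odd sum by a counting argument in which the symmetric (paired) contributions appear with a factor $2$ and vanish modulo $2$, leaving $\sum a_i+\sum b_i$; the values $c_0=1$ and $c_{n/2}=0$ are merely asserted there, whereas your pairing under $i\leftrightarrow n-i$ actually proves them. Your reformulation --- stripping the constant terms, expanding in the basis $P_k=x^k+x^{-k}$ with $P_iP_j=P_{i+j}+P_{i-j}$, and reducing everything to $\sigma(P_{i+j})+\sigma(P_{i-j})=0$ because $i+j$ and $i-j$ share parity --- is more structural and much easier to audit; the paper's version is more elementary (no change of basis) but its integer-versus-$\mathbb{F}_2$ bookkeeping is left implicit.

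More importantly, the caveat you flag is not cosmetic. The identity ``$\sigma(P_k)=1$ iff $k$ is odd'' genuinely fails at $k=n/2$ when $n/2$ is odd (there $P_{n/2}=0$ yet $n/2$ is odd), and correspondingly the lemma as stated is \emph{false} for $n\equiv 2\pmod 4$: for $n=6$, take $f_a=1+x+x^5$ and $f_b=1+x^2+x^4$, which satisfy every hypothesis; then $f_af_b\equiv 1+x^2+x^4\pmod{x^6-1}$, so $c_1=0$ while $a_1+b_1=1$. The paper's proof misses this --- its factor-$2$ cancellation silently assumes the middle index $n/2$ does not contribute to the odd-index sum --- while your argument isolates exactly where $4\mid n$ is needed. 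Since the lemma is only invoked for $n=2^s\geq 4$, your restricted version fully suffices for the paper, and the hypothesis ``$n$ even'' in the statement should really read ``$4\mid n$.''
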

\begin{proof}
Since $f_a(x)$ and $f_b(x)$ are symmetric, $f_c(x)$ is symmetric
by Lemma \ref{mult}. Also
\begin{eqnarray*}
\sum\limits_{1\leq i\leq n/2-1, (i,2)=1}a_i &=& \sum\limits_{n/2+1\leq i\leq n-1, (i,2)=1}a_i,
\sum\limits_{1\leq i\leq n/2-1, (i,2)=2}a_i = \sum\limits_{n/2+1\leq i\leq n-1, (i,2)=2}a_i, \\
\sum\limits_{1\leq i\leq n/2-1, (i,2)=1}b_i &=& \sum\limits_{n/2+1\leq i\leq n-1, (i,2)=1}b_i,
\sum\limits_{1\leq i\leq n/2-1, (i,2)=2}b_i =
\sum\limits_{n/2+1\leq i\leq n-1, (i,2)=2}b_i.
\end{eqnarray*}

Note that when $n$ is even, the odd exponent term $c_ix^i$ in
$f_c(x)$ can only be obtained by multiplying $a_jx^j$ in $f_a(x)$
and $b_kx^k$ in $f_b(x)$ under the condition that $j\neq
k(\mbox{mod}\ 2)$. Hence
\begin{eqnarray*}
\sum\limits_{1\leq i\leq n/2-1, (i,2)=1}c_i
&=&2\cdot\sum\limits_{1\leq j\leq n/2-1, (j,2)=1}a_j\cdot \sum\limits_{1\leq k\leq n/2-1, (k,2)=2}b_i\\
&\ &\quad +2\cdot\sum\limits_{1\leq k\leq n/2-1, (k,2)=1}b_i\cdot \sum\limits_{1\leq j\leq n/2-1, (j,2)=2}a_i\\
&\ &\quad +\sum\limits_{1\leq i\leq n/2-1, (i,2)=1}a_i + \sum\limits_{1\leq i\leq n/2-1, (i,2)=1}b_i\\
&=&\sum\limits_{1\leq i\leq n/2-1, (i,2)=1}(a_i+b_i).
\end{eqnarray*}
Similarly, $c_0=1$ and $c_{n/2}=0$ can be easily verified.
\end{proof}

\begin{theorem}\label{2powercase}
For $n=2^s\geq 4$, there exists a normal element of
$\mathbb{F}_{2^{n}}$ over $\mathbb{F}_2$ corresponding to vector
$(a_0,a_1,\cdots,a_{n-1})\in \mathbb{F}_2^n$, if and only if

1) $a_0=1$, $a_{n/2}=0$,

2) $a_i=a_{n-i}$ for all $1\leq i\leq n/2-1$,

3) $\sum\limits_{1\leq i\leq n/2-1, (i,2)=1}a_i=1$.
\end{theorem}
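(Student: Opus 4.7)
The plan is to combine three earlier tools, Theorem~\ref{polybasis} (factorisation $f_a=f_b f_c f_c^*$), Theorem~\ref{main2} (characterisation of products $g\cdot g^*$ in $\mathbb{F}_2[z]/(z^n-1)$), and Lemma~\ref{mult1} (behaviour of the odd-index sum under multiplication), in order to reduce the problem to a single $\mathbb{F}_2$-invariant of the normal basis, and then pin that invariant down using Theorem~\ref{scn} (non-existence of self-dual normal bases for $n=2^s\geq 4$).

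For necessity, I would check the three conditions on the corresponding vector $a$ of a normal element $\alpha$. Symmetry $a_i=a_{n-i}$ is immediate from Frobenius invariance of $\mbox{Tr}_{2^n|2}$. The vanishing $a_{n/2}=0$ holds for every $\alpha$ when $n$ is even, since $\alpha^{1+2^{n/2}}$ is fixed by the $2^{n/2}$-power map, so it lies in $\mathbb{F}_{2^{n/2}}$ and $\mbox{Tr}_{2^n|2}(\alpha^{1+2^{n/2}})=\mbox{Tr}_{2^{n/2}|2}(\gamma+\gamma^{2^{n/2}})=0$ in characteristic $2$. The condition $a_0=1$ follows from Theorem~\ref{norm1}: $f_a$ must be coprime to $x^n-1=(x+1)^n$, i.e.\ $f_a(1)\neq 0$, and symmetry together with $a_{n/2}=0$ collapses $f_a(1)$ to just $a_0$.

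The heart of necessity is condition~3). Fix any normal element $\beta$ with corresponding vector $b$ (existing by Theorem~\ref{nbth}); every $\alpha\in\mathbb{F}_{2^n}$ can be written as $\alpha=\sum c_i\beta^{2^i}$, by Theorem~\ref{guodu} $\alpha$ is normal iff $f_c$ is coprime to $x^n-1$, and by Theorem~\ref{polybasis} the corresponding vector of $\alpha$ satisfies $f_a\equiv f_b\cdot f_c\cdot f_c^*\pmod{x^n-1}$. The factor $f_c\cdot f_c^*$ automatically lies in the set $H$ of Theorem~\ref{main2}, so in particular $\sum_{1\leq i\leq n/2-1,(i,2)=1}(f_cf_c^*)_i=0$; Lemma~\ref{mult1} then gives
$$\sum_{1\leq i\leq n/2-1,(i,2)=1}a_i \;=\; \sum_{1\leq i\leq n/2-1,(i,2)=1}b_i,$$
so this odd-sum is a normal-basis invariant $J\in\mathbb{F}_2$. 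To force $J=1$, I would argue by contradiction: if $J=0$ then $f_b\in H$, and using Lemma~\ref{inv} together with Lemma~\ref{mult1} applied to $f_b\cdot f_b^{-1}=1$ one shows $f_b^{-1}\in H$ as well; Theorem~\ref{main2} then supplies $g$ with $g\cdot g^*=f_b^{-1}$, $g$ coprime to $x^n-1$, and $\alpha:=\sum g_i\beta^{2^i}$ is a normal element whose corresponding vector is $(1,0,\dots,0)$, i.e.\ a self-dual normal basis, contradicting Theorem~\ref{scn} since $4\mid n$. Hence $J=1$.

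For sufficiency, given any $a$ satisfying (1)--(3), take a normal $\beta$ with vector $b$, set $h:=f_a\cdot f_b^{-1}$, and verify that $h\in H$. Indeed $h$ is symmetric by Lemma~\ref{mult}, $h_0=1$ and $h_{n/2}=0$ by Lemmas~\ref{inv} and \ref{mult1}, and
$$\sum_{1\leq i\leq n/2-1,(i,2)=1}h_i \;=\; \sum_{(i,2)=1}a_i+\sum_{(i,2)=1}(f_b^{-1})_i \;=\; 1+1=0,$$
using the just-established invariance $\sum_{(i,2)=1}(f_b^{-1})_i=\sum_{(i,2)=1}b_i=1$. By Theorem~\ref{main2} write $h=g\cdot g^*$ with $g\in G$; since $h$ is a unit in $\mathbb{F}_2[x]/(x^n-1)$, so is $g$. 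Setting $\alpha:=\sum g_i\beta^{2^i}$, Theorem~\ref{guodu} shows $\alpha$ is normal and Theorem~\ref{polybasis} shows its corresponding vector is precisely $a$.

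I expect the main obstacle to be the invariance-and-pinning-down step: one has to recognise that, once symmetry, $a_0=1$, and $a_{n/2}=0$ are imposed, the only remaining obstruction to being a corresponding vector is the value of the odd-sum, and to see that this single bit is forced to $1$ exactly by the non-existence of a self-dual normal basis. Everything else is bookkeeping with the lemmas already proved.
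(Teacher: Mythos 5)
Your proof is correct, and the sufficiency half together with the necessity of conditions 1) and 2) follows the paper essentially verbatim; the genuine divergence is in the necessity of condition 3). The paper proves $\sum_{1\leq i\leq n/2-1,(i,2)=1}a_i=1$ by a short direct field computation: setting $u=\sum_{(i,2)=1}\alpha^{2^i}$, so that $u^2=\sum_{(j,2)=2}\alpha^{2^j}$ and $u+u^2=\mbox{Tr}_{2^n|2}(\alpha)=1$, the odd-index sum collapses to $u\cdot u^2=u(u+1)=1$. You instead show via Theorem~\ref{polybasis}, the necessity half of Theorem~\ref{main2}, and Lemma~\ref{mult1} that the odd-index sum is the same for every normal element (since the transition factor $f_cf_c^*$ always has odd-sum $0$), and then rule out the value $0$ by observing that it would produce, via the sufficiency machinery, a normal element with corresponding vector $(1,0,\dots,0)$, i.e.\ a self-dual normal basis, contradicting Theorem~\ref{scn} for $4\mid n$. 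Both arguments are valid; yours is an elegant ``invariant plus obstruction'' argument, but it imports the Lempel--Weinberger non-existence theorem as an external input, whereas the paper's two-line computation is self-contained and in fact makes the $4\mid n$ half of Theorem~\ref{scn} a corollary of Theorem~\ref{2powercase} rather than a prerequisite. One small point worth making explicit in your write-up: the fact that $f_cf_c^*$ has constant term $1$ (needed to invoke the odd-sum-zero conclusion of Theorem~\ref{main2} and Lemma~\ref{mult1}) comes from $f_c(1)=1$, which holds because $f_c$ is coprime to $x^n-1=(x+1)^n$.
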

\begin{proof}
(Necessity) Let $\alpha$ be a normal element of $
\mathbb{F}_{2^{n}}$ over $\mathbb{F}_2$, and its corresponding
vector is $(a_0,a_1,\cdots,a_{n-1})\in \mathbb{F}_2^n$. Then
obviously
$a_0=\mbox{Tr}_{2^n|2}(\alpha\cdot\alpha^{2^0})=\mbox{Tr}_{2^n|2}(\alpha)=1$,
$a_{n/2}=\mbox{Tr}_{2^n|2}(\alpha\cdot\alpha^{2^{n/2}})=\mbox{Tr}_{2^{n/2}|2}(\mbox{Tr}_{2^n|2^{n/2}}(\alpha\cdot\alpha^{2^{n/2}}))
=\mbox{Tr}_{2^{n/2}|2}(0)=0$, and
$a_i=\mbox{Tr}_{2^n|2}(\alpha\cdot\alpha^{2^i})=\mbox{Tr}_{2^n|2}(\alpha\cdot\alpha^{2^{n-i}})=a_{n-i}$
for all $1\leq i\leq n/2-1$.

Let $u=\sum\limits_{0\leq i\leq n-1,
(i,2)=1}\alpha^{2^i}$. Then $u^2=\sum\limits_{0\leq j\leq
n-1,(j,2)=2}\alpha^{2^j}$, and $\mbox{Tr}_{2^n|2}(\alpha)=u+u^2$.
Because $\mbox{Tr}_{2^n|2}(\alpha)=a_0=1$, $u+u^2=1=u(u+1)$, and
$u^2=u+1$. Hence
\begin{eqnarray*}
\sum\limits_{1\leq i\leq n/2-1, (i,2)=1}a_i
&=&\sum\limits_{1\leq i\leq n/2-1, (i,2)=1}\sum\limits_{0\leq j\leq n-1}(\alpha^{1+2^i})^{2^j}\\
&=&(\sum\limits_{0\leq i\leq n-1, (i,2)=1}\alpha^{2^i})\cdot (\sum\limits_{0\leq j\leq n-1,(j,2)=2}\alpha^{2^j})\\
&=&u(u+1)\\
&=&1.
\end{eqnarray*}

(Sufficiency) Suppose $(a_0,a_1,\cdots,a_{n-1})\in \mathbb{F}_2^n$
satisfies conditions 1)-3), we find a normal element $\alpha$ of
$\mathbb{F}_{2^{n}}$ over $\mathbb{F}_2$ such that
$\mbox{Tr}_{2^n|2}(\alpha^{1+2^i})=a_i$ for all $0\leq i\leq n-1$.
Let $f_a(x)=\sum\limits_{0\leq i\leq n-1}a_ix^i$.

The well-known normal basis theorem Theorem \ref{nbth} tells us that
there exists a normal element $\beta$ of $\mathbb{F}_{2^{n}}$ over
$\mathbb{F}_2$. Let its corresponding polynomial be
$f_b(x)=\sum\limits_{0\leq i\leq n-1}b_ix^i$ with
$b_i=\mbox{Tr}_{2^n|2}(\beta^{1+2^i})$. From the above necessity
part proof, we see that $f_b(x)$ is symmetric with $b_0=1$, and
$\sum\limits_{1\leq i\leq n/2-1, (i,2)=1}b_i=1$.

Obviously $(f_b(x),x^n-1)=1$ when $n=2^s$. Let
$f_b^{-1}(x)(\mbox{mod}\ x^n-1)=\sum\limits_{0\leq i\leq
n-1}b_i'x^i$. By Lemma \ref{inv}, $b_0'=1$, $f_b^{-1}(x)$ is
symmetric and relatively prime to $x^n-1$, which also implies that
$b_{n/2}'=0$.

Since $f_b(x)\cdot f_b^{-1}(x)=1$ in the polynomial ring
$\mathbb{F}_2[x]/(x^n-1)$, by Lemma \ref{mult1} we have
$\sum\limits_{1\leq i\leq n/2-1, (i,2)=1}(b_i+b_i')=0$. Therefore
$$\sum\limits_{1\leq i\leq n/2-1, (i,2)=1}b_i'=\sum\limits_{1\leq
i\leq n/2-1, (i,2)=1}b_i=1.$$

Let $h(x)=f_a(x)\cdot f_b^{-1}(x)=\sum\limits_{0\leq i\leq
n-1}h_ix^i\in \mathbb{F}_2[x]/(x^n-1)$. Again by Lemma
\ref{mult1}, $h(x)$ is symmetric, $h_0=1$, $h_{n/2}=0$, and
$$\sum\limits_{1\leq i\leq n/2-1, (i,2)=1}h_i=\sum\limits_{1\leq
i\leq n/2-1, (i,2)=1}(b_i'+a_i)=0.$$

So $h(x)$ is in the set $H$ defined by Theorem \ref{main2}. Thus
we can find a unique solution $g(x)=\sum\limits_{0\leq i\leq
n-1}c_ix^i$ in the set $G$ defined in Theorem \ref{main2} such
that $h(x)=g(x)g^*(x)$. It can be easily verified that
$(g(x),x^n-1)=1$.

Now let $\alpha=\sum\limits_{0\leq i\leq n-1}c_i\beta^{2^i}$.
Theorem \ref{guodu} shows that $\alpha$ is a normal element of
$\mathbb{F}_{2^{n}}$ over $\mathbb{F}_2$. Furthermore
$h(x)=g(x) g^*(x)$ means that $f_a(x)=f_b(x) g(x)
g^*(x)$. By Lemma \ref{polybasis}, we know that the corresponding
vector of the normal element $\alpha$ is the given vector
$(a_0,a_1,\cdots,a_{n-1})$.
\end{proof}

\section{The Case odd $n$}
\begin{theorem}\label{oddfenjie}
Let $h(z)=\sum\limits_{0\leq i\leq n-1}a_iz^i \in
\mathbb{F}_2[z]/(z^n-1)$ for odd $n$. Then $h(z)=g(z) g^*(z)$
for some $g(z)=\sum\limits_{0\leq i\leq n-1}b_iz^i \in
\mathbb{F}_2[z]/(z^n-1)$ if and only if $h(z)$ is symmetric.

Furthermore, every symmetric polynomial $h(z)=\sum\limits_{0\leq
i\leq n-1}a_iz^i\in \mathbb{F}_2[z]/(z^n-1)$ has a factorization
$h(z)=g(z) g(z)^*$ such that
$$g(z)=\sum\limits_{0\leq i\leq n-1}a_{2i(\mbox{\textup{mod}}\ n)}z^i.$$
\end{theorem}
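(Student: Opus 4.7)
The plan is to handle necessity and sufficiency separately, with the sufficiency part using the proposed explicit formula for $g(z)$.

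For necessity, I would just observe that the $*$-operation on $\mathbb{F}_2[z]/(z^n-1)$ is an involutive anti-homomorphism (actually a homomorphism here since the ring is commutative), so if $h = g\cdot g^*$ then $h^* = (g\cdot g^*)^* = g^{**}\cdot g^* = g\cdot g^*= h$, i.e., $h$ is symmetric. This is immediate and needs no further work.

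For sufficiency and the explicit formula, I would exploit two features simultaneously: (i) since $n$ is odd, $z^n-1$ is separable over $\mathbb{F}_2$, so by CRT it suffices to verify the identity $h(\zeta) = g(\zeta)g^*(\zeta)$ at every $n$-th root of unity $\zeta\in\overline{\mathbb{F}_2}$; and (ii) since $n$ is odd, $2$ is invertible mod $n$, so the map $i\mapsto 2i\pmod n$ permutes $\{0,1,\dots,n-1\}$, making the definition $g(z)=\sum_{i}a_{2i\pmod n}z^i$ legitimate. The key computation is then the substitution $k=2i\pmod n$, which gives
\begin{equation*}
g(\zeta)=\sum_{i=0}^{n-1}a_{2i\bmod n}\zeta^{i}=\sum_{k=0}^{n-1}a_{k}\zeta^{k\cdot 2^{-1}\bmod n}=h(\zeta^{2^{-1}}),
\end{equation*}
where $2^{-1}$ denotes the inverse of $2$ modulo $n$.

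Given this, I would write $g^*(\zeta)=g(\zeta^{-1})=h(\zeta^{-2^{-1}})$, and then use the hypothesis that $h$ is symmetric (equivalently $h(\xi^{-1})=h(\xi)$ for every $n$-th root of unity $\xi$) to rewrite this as $h(\zeta^{2^{-1}})$. Multiplying and using that $a_i\in\mathbb{F}_2$ (so $a_i^2=a_i$) together with the Frobenius $x\mapsto x^2$ on $\overline{\mathbb{F}_2}$ gives
\begin{equation*}
g(\zeta)g^*(\zeta)=h(\zeta^{2^{-1}})^2=\sum_{i}a_i^{2}\zeta^{i}=\sum_i a_i\zeta^i=h(\zeta).
\end{equation*}
Since this holds at every $n$-th root of unity and $z^n-1$ is squarefree, we conclude $h(z)\equiv g(z)g^*(z)\pmod{z^n-1}$.

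The only genuine subtlety, and what I would flag as the main conceptual point rather than obstacle, is recognizing that the odd hypothesis on $n$ is used twice: once to make $z^n-1$ separable (so pointwise evaluation at roots characterizes polynomials mod $z^n-1$), and once to invert $2$ modulo $n$ (giving a well-defined "square root" permutation of the roots of unity). Combined with characteristic $2$, which supplies the Frobenius identity $h(\xi^{2^{-1}})^2=h(\xi)$, everything collapses to a one-line verification. No case analysis or induction is needed, in contrast with the $n=2^s$ case of Theorem~\ref{main2}.
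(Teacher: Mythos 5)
Your proof is correct, and for the sufficiency part it rests on the same two pillars as the paper's: the invertibility of $2$ modulo odd $n$ (so $i\mapsto 2i\bmod n$ permutes the indices and $g$ is well defined) and the Frobenius identity in characteristic $2$. The difference is one of execution. The paper works directly with polynomial identities in $\mathbb{F}_2[z]/(z^n-1)$: it observes that symmetry of $h$ forces $b_i=b_{n-i}$, hence $g=g^*$, and then $g\cdot g^*=g^2=\sum b_i z^{2i}=h$ because squaring permutes the exponents; no appeal to roots of unity or separability is needed. You instead evaluate at the $n$-th roots of unity and invoke separability of $z^n-1$ (which does hold for odd $n$, and which you correctly flag as essential) to pass from pointwise equality back to a congruence mod $z^n-1$ --- a clean but slightly heavier route, since your identity $g^*(\zeta)=g(\zeta)$ at every root is just the paper's $g=g^*$ in disguise. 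For necessity your argument is actually tidier than the paper's: you use that $f\mapsto f^*$ is an involutive ring automorphism of $\mathbb{F}_2[z]/(z^n-1)$ (induced by $z\mapsto z^{-1}$) to get $h^*=(gg^*)^*=g^*g=h$ in one line, whereas the paper expands $h=gg^*$ into the quadratic system $h_j=\sum_i z_iz_{i+j}$ and checks $h_j=h_{n-j}$ term by term. Both routes are valid; yours trades the paper's elementary coefficient bookkeeping for standard structure theory of the group algebra.
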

\begin{proof}
(Necessity) It is easy to deduce that the condition that
$h(z)=g(z) g(z)^*$ for some $g(z)\in \mathbb{F}_2[z]/(z^n-1)$
is equivalent to the following equation system having a solution:
\begin{equation} \label{zu11}
\left\{
\begin{array}{ll}
h_0(z_0,\cdots,z_{n-1})&=z_0+ \cdots + z_{n-1} =a_0,\\
h_1(z_0,\cdots,z_{n-1})&=z_0z_1+z_1z_2+z_2z_3+ \cdots + z_{n-1}z_0 =a_{n-1},\\
%h_2(z_0,\cdots,z_{n-1})&=z_0z_2+z_1z_3+z_2z_4+ \cdots + z_{n-1}z_1 =a_{n-2},\\
&\vdots\\
h_{n-1}(z_0,\cdots,z_{n-1})&=z_0z_{n-1}+z_1z_0+z_2z_1+ \cdots +
z_{n-1}z_{n-2} =a_{1}.
\end{array}
\right.
\end{equation}

In the above equation system, it is obvious that for $1\leq j\leq
(n-1)/2$,
\begin{equation*}
\begin{array}{ll}
&h_j(z_0,\cdots,z_{n-1})=a_{n-j}\\
&=z_0z_j+z_1z_{1+j}+z_2z_{2+j}+ \cdots + z_{n-1}z_{t-1} \\
&=z_0z_{n-j}+z_1z_{n-j+1}+\cdots +z_{n-1}z_{n-t-1}\\
&=a_j=h_{n-j}(z_0,\cdots,z_{n-1}).
\end{array}
\end{equation*}

(Sufficiency) Suppose that $h(z)=\sum\limits_{0\leq i\leq
n-1}a_iz^i \in \mathbb{F}_2[z]/(z^n-1)$ is symmetric. Let
$g(z)=\sum\limits_{0\leq i\leq n-1}b_iz^i$ where
$b_i=a_{2i(\mbox{mod} n)}$.

It is easy to see that the set $\{a_0,\cdots,a_{n-1}\}$ is a
reassignment of $\{b_0,\cdots,b_{n-1}\}$ because $(n,2)=1$. Since
$a_i=a_{n-i}$ for $1\leq i\leq (n-1)/2$, we have $b_i=b_{n-i}$ for
$1\leq i\leq (n-1)/2$. Thus $g(z)=g^*(z)$. So
$$g(z)\cdot g^*(z)=(g(z))^2=h(z).$$
\end{proof}

\begin{theorem}\label{oddcase}
For odd $n$, there exists a normal element of $\mathbb{F}_{2^{n}}$
over $\mathbb{F}_2$ corresponding to a vector
$(a_0,a_1,\cdots,a_{n-1})\in \mathbb{F}_2^n$, if and only if
$f_a(x)=\sum\limits_{0\leq i\leq n-1}a_ix^i$ is symmetric and
$(f_a(x),\ x^n-1)=1$.
\end{theorem}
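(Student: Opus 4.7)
The plan is to mimic the structure of the proof of Theorem \ref{2powercase}, with Theorem \ref{oddfenjie} replacing Theorem \ref{main2}. The necessity direction is almost immediate: if $\alpha$ is a normal element with corresponding vector $a$, then applying the Frobenius stability of the trace gives
$a_i = \mbox{Tr}_{2^n|2}(\alpha\cdot \alpha^{2^i}) = \mbox{Tr}_{2^n|2}((\alpha\cdot\alpha^{2^i})^{2^{n-i}}) = \mbox{Tr}_{2^n|2}(\alpha\cdot \alpha^{2^{n-i}}) = a_{n-i}$,
so $f_a(x)$ is symmetric. Coprimality of $f_a(x)$ with $x^n-1$ is then exactly the characterization of normal elements given in Theorem \ref{norm1}.

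For the sufficiency direction, I would start from a fixed (auxiliary) normal element $\beta\in\mathbb{F}_{2^n}$, guaranteed by Theorem \ref{nbth}, and let $f_b(x)=\sum b_ix^i$ be its corresponding polynomial. By the necessity argument already established, $f_b(x)$ is symmetric and coprime to $x^n-1$. By Lemma \ref{inv}, $f_b^{-1}(x)$ in $\mathbb{F}_2[x]/(x^n-1)$ is also symmetric and coprime to $x^n-1$. Form
$h(x)=f_a(x)\cdot f_b^{-1}(x)\pmod{x^n-1}$.
Lemma \ref{mult} guarantees $h(x)$ is symmetric (as a product of two symmetric polynomials), and $h(x)$ is coprime to $x^n-1$ because both factors are.

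Now apply Theorem \ref{oddfenjie}: since $n$ is odd and $h(x)$ is symmetric, there exists $g(x)=\sum c_ix^i\in\mathbb{F}_2[x]/(x^n-1)$ with $h(x)=g(x)g^*(x)$ (explicitly, $c_i=h_{2i\bmod n}$, though any such $g$ will do). Because $h(x)$ is coprime to $x^n-1$, its divisor $g(x)$ must also be coprime to $x^n-1$. Define $\alpha=\sum_{0\leq i\leq n-1}c_i\beta^{2^i}$. Theorem \ref{guodu} then certifies that $\alpha$ is a normal element of $\mathbb{F}_{2^n}$ over $\mathbb{F}_2$. Finally, Theorem \ref{polybasis} computes the corresponding polynomial of $\alpha$ as
$f_b(x)\cdot g(x)\cdot g^*(x)=f_b(x)\cdot h(x)=f_b(x)\cdot f_a(x)\cdot f_b^{-1}(x)=f_a(x)\pmod{x^n-1}$,
so $\alpha$ has corresponding vector exactly $(a_0,\ldots,a_{n-1})$, completing the proof.

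There is no serious obstacle here — the odd case is considerably cleaner than the $n=2^s$ case because the factorization in Theorem \ref{oddfenjie} is unconditional (every symmetric polynomial factors as $g\cdot g^*$, indeed as a square) and because no extra parity constraints like $a_0=1$, $a_{n/2}=0$, or $\sum_{(i,2)=1}a_i=1$ need to be imposed. The only modest point of care is verifying that the factor $g(x)$ inherits coprimality with $x^n-1$ from $h(x)$, which is automatic in a commutative ring once $h(x)=g(x)g^*(x)$ is written down.
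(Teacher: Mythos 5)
Your proposal is correct and follows essentially the same route as the paper: necessity via Frobenius-invariance of the trace plus Theorem \ref{norm1}, and sufficiency by taking an auxiliary normal element $\beta$, forming $h(x)=f_a(x)f_b^{-1}(x)$ via Lemmas \ref{inv} and \ref{mult}, factoring $h=gg^*$ by Theorem \ref{oddfenjie}, and concluding with Theorems \ref{guodu} and \ref{polybasis}. Your remark that $g$ inherits coprimality with $x^n-1$ because a factor of a unit in $\mathbb{F}_2[x]/(x^n-1)$ is a unit is exactly the justification the paper leaves as ``easily verified.''
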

\begin{proof}
(Necessity) Since $a_i=Tr_{2^n|2}(\alpha^{1+2^i})=Tr_{2^n|2}(\alpha^{1+2^{n-i}})$ for all $i\in [1,n-1]$, $(a_0,a_1,$ $\cdots,a_{n-1})$
is symmetric. By Theorem \ref{norm1}, we have $(f_a(x),\ x^n-1)=1$.

(Sufficiency) Suppose that $(a_0,a_1,\cdots,a_{n-1})\in
\mathbb{F}_2^n$ satisfies $(f_a(x),\ x^n-1)=1$, we find a normal
element $\alpha$ of $\mathbb{F}_{2^{n}}$ over $\mathbb{F}_2$ such
that $\mbox{Tr}_{2^n|2}(\alpha^{1+2^i})=a_i$ for all $0\leq i\leq
n-1$.

By the well-known normal basis theorem Theorem \ref{nbth}, there exists
a normal element $\beta$ of $\mathbb{F}_{2^{n}}$ over
$\mathbb{F}_2$. Let its corresponding polynomial be
$f_b(x)=\sum\limits_{0\leq i\leq n-1}b_ix^i$ with
$b_i=\mbox{Tr}_{2^n|2}(\beta^{1+2^i})$. From the above necessity
part proof, we see that $f_b(x)$ is symmetric and $(f_b(x),\
x^n-1)=1$.

Let $f_b^{-1}(x)(\mbox{mod}\ x^n-1)=\sum\limits_{0\leq i\leq
n-1}b_i'x^i$. By Lemma \ref{inv}, $f_b^{-1}(x)$ is symmetric and
relatively prime to $x^n-1$.

So $h(x)=f_a(x)f_b^{-1}(x)$ is symmetric and relatively prime to
$x^n-1$ by Lemma \ref{mult}. From Theorem \ref{oddfenjie}, there
is a solution $g(x)=\sum\limits_{0\leq i\leq n-1}c_ix^i$ such that
$h(x)=g(x) g^*(x)$. It can be easily verified that
$(g(x),x^n-1)=1$.

Now let $\alpha=\sum\limits_{0\leq i\leq n-1}c_i\beta^{2^i}$.
Theorem \ref{guodu} shows that $\alpha$ is a normal element of
$\mathbb{F}_{2^{n}}$ over $\mathbb{F}_2$. Furthermore
$h(x)=g(x) g^{-1}(x)$ means that $f_a(x)=f_b(x)
g(x) g^*(x)$. By Lemma \ref{polybasis}, we know that the
corresponding vector of the normal element $\alpha$ is the given
vector $(a_0,a_1,\cdots,a_{n-1})$.
\end{proof}

Finally for $n=2^s\geq 4$ and odd $n$, we give the following algorithm to find a normal
element of $\mathbb{F}_{2^{n}}$ over $\mathbb{F}_2$ corresponding
to a given vector $(a_0,a_1,\cdots,a_{n-1})\in\mathbb{F}_2^n$.

\begin{algorithm}\label{oddalg}

\

Input: $(a_0,a_1,\cdots,a_{n-1})\in\mathbb{F}_2^n$, where $n=2^s\geq 4$ or $n$ is
odd.

Output: A normal element of $\mathbb{F}_{2^{n}}$ over
$\mathbb{F}_2$ such that
$\mbox{\textup{Tr}}_{2^n|2}(\alpha^{1+2^i})=a_i$ for all $0\leq
i\leq n-1$.

\vspace{3mm}

Step 1a: For $n=2^s\geq 4$, check whether $(a_0,a_1,\cdots,a_{n-1})\in\mathbb{F}_2^n$
satisfies $a_0=1$, $a_{n/2}=0$, $a_i=a_{n-i}$ for $1\leq i\leq
n/2-1$, and $\sum\limits_{1\leq i\leq n/2-1,(i,2)=1}a_i=1$. If
not, then output "There isn't such a normal element". Let
$f_a(x)=\sum\limits_{0\leq i\leq n-1}a_ix^i$.

Step 1b: For odd $n$, check whether $(a_0,a_1,\cdots,a_{n-1})\in\mathbb{F}_2^n$
is symmetric and $(f_a(x),\ x^n-1)=1$, where
$f_a(x)=\sum\limits_{0\leq i\leq n-1}a_ix^i$. If not, then output
"There isn't such a normal element".

Step 2: Find a normal element $\beta$ of $\mathbb{F}_{2^{n}}$ over
$\mathbb{F}_2$ (for example using the method in \cite{poli}).

Step 3: Compute $(b_0,b_1,\cdots,b_{n-1})$, where
$b_i=\mbox{\textup{Tr}}_{2^n|2}(\beta^{1+2^i})$.

Step 4: Use the standard extended GCD algorithm to compute
$f_b^{-1}(x)(\mbox{\textup{mod}}\ x^n-1)$, where
$f_b(x)=\sum\limits_{0\leq i\leq n-1}b_ix^i$.

Step 5a: For $n=2^s\geq 4$, solve the linear system (\ref{zu3}) to find the unique
solution $g(x)=\sum\limits_{0\leq i\leq n-1}c_ix^i\in G$ of
$h(x)=f_a(x)f_b^{-1}(x)=g(x)g^*(x)$.

Step 5b: For odd $n$, compute $h(x)=f_a(x)f_b^{-1}(x)=\sum\limits_{0\leq i\leq
n-1}h_ix^i\in\mathbb{F}_2[x]/(x^n-1)$. Let
$g(x)=\sum\limits_{0\leq i\leq
n-1}c_ix^i\in\mathbb{F}_2[x]/(x^n-1)$, where
$c_i=h_{2i(\mbox{\textup{mod}}\ n)}$.

Step 6: Output $\alpha=\sum\limits_{0\leq i\leq
n-1}c_i\beta^{2^i}$.

\end{algorithm}

\begin{remark}
The correctness of the above algorithm can be showed by Theorem \ref{2powercase} and Theorem \ref{oddcase}.  The running time of solving linear equation system in Step 5a is $O(n^3)$ $\mathbb{F}_2-$operations. Thus the expected computation complexity of the above algorithm can be bounded by $O(n^3)$.
\end{remark}

\begin{example}
%ci=15439, r=110,1,2,3,4,11,12,13,15,

Let $n=16$ and the irreducible polynomial of order 16 be $f(x)=x^{16}+x^5+x^3+x^2+1\in \mathbb{F}_{2}[x].$
Define the finite field $\mathbb{F}_{2^{16}}$ to be $\mathbb{F}_{2}[x]/(f(x))$. Suppose $\gamma\in \mathbb{F}_{2^{16}}$ is a root of $f(x)$.
We want to find a normal element $\alpha$ with corresponding vector $$a=(1,1,0,0,0,0,0,0,0,0,0,0,0,0,0,1)\in\mathbb{F}_2^{16}$$ by the above algorithm.

Step 1a: Check whether the vector $a$
satisfies $a_0=1$, $a_{8}=0$, $a_i=a_{16-i}$ for $1\leq i\leq
7$, and $\sum\limits_{1\leq i\leq 7,(i,2)=1}a_i=1$. Let
$f_a(x)=\sum\limits_{0\leq i\leq 15}a_ix^i=1+x+x^{15}$.

Step 2: Choose the normal element $\beta=\gamma+\gamma^{126}$ of $\mathbb{F}_{2^{16}}$ over
$\mathbb{F}_2$.

Step 3: Compute $b=(b_0,b_1,\cdots,b_{15})=(1,0,1,1,1,0,0,0,0,0,0,0,1,1,1,0)$, where
$b_i=\mbox{\textup{Tr}}_{2^{16}|2}(\beta^{1+2^i})$ for $0\leq i\leq 15$.

Step 4: Let $f_b(x)=\sum\limits_{0\leq i\leq 15}b_ix^i=1+x^2+x^3+x^4+x^{12}+x^{13}+x^{14}$. Use the standard extended GCD algorithm to compute
$f_b^{-1}(x)(\mbox{\textup{mod}}\ x^{16}-1)=1+ x+ x^2+ x^3+ x^5+ x^6+ x^{10}+ x^{11}+ x^{13}+ x^{14} +x^{15}$.

Step 5a: Compute $h(x)=f_a(x)f_b^{-1}(x)\pmod{x^{16}-1}=1+ x+ x^2+ x^7+ x^9+ x^{14}+ x^{15}$. By (\ref{zu3}) solve the linear equation system
\begin{equation*}
\left\{
\begin{array}{ll}
y_0&=h_0(z_0,\cdots,z_{15})=1,\\
y_1&=h_1(z_0,\cdots,z_{15})=z_{7}+z_1=1,\\
y_2&=h_2(z_0,\cdots,z_{15})=z_1=1,\\
y_3&=h_3(z_0,\cdots,z_{15})=z_1+z_3+z_{6}=0,\\
y_4&=h_4(z_0,\cdots,z_{15})=z_4+z_3=0,\\
y_5&=h_5(z_0,\cdots,z_{15})=z_5+z_4+z_{5}=0,\\
y_6&=h_6(z_0,\cdots,z_{15})=z_6+z_5=0,\\
y_7&=h_7(z_0,\cdots,z_{15})=z_7+z_6+z_{4}+z_3=1,\\
\end{array}
\right.
\end{equation*}
we obtain $g(x)=\sum\limits_{0\leq i\leq 15}c_ix^i\in G=1+ x+ x^5+ x^6+ x^9+ x^{10}+ x^{14}$, the unique
solution of
$h(x)=f_a(x)f_b^{-1}(x)=g(x)g^*(x)$.

Step 6: Output the normal element $\alpha=\sum\limits_{0\leq i\leq
15}c_i\beta^{2^i}=\sum\limits_{0\leq i\leq
15}c_i(\gamma+\gamma^{126})^{2^i}$, where $$(c_0,\cdots,c_{15})=(1,1,0,0,0,1,1,0,0,1,1,0,0,0,1,0).$$
\end{example}

\section{The General Case}
In this section, we will give some necessary conditions for a
vector to be a corresponding vector of a normal element of
$\mathbb{F}_{2^n}$ over $\mathbb{F}_2$ for general $n$ with $4|n$. Firstly
recall a result about constructing a normal element over sub-field
from a normal element over a larger field.
\begin{theorem}\label{perlis}\cite{perlis}
Let $t$ and $v$ be any positive integers. If $\alpha$ is a normal
element of $\mathbb{F}_{q^{vt}}$ over $\mathbb{F}_q$, then
$\gamma=\mbox{Tr}_{q^{vt}|q^t}(\alpha)$ is a normal element of
$\mathbb{F}_{q^{t}}$ over $\mathbb{F}_q$.
\end{theorem}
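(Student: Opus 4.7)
The plan is to verify the defining property of a normal element directly: show that $\{\gamma,\gamma^{q},\ldots,\gamma^{q^{t-1}}\}$ is a basis of $\mathbb{F}_{q^{t}}$ over $\mathbb{F}_{q}$. Since $\mathrm{Tr}_{q^{vt}|q^{t}}(\alpha)\in\mathbb{F}_{q^{t}}$ by the standard fact that the relative trace lands in the base field, each Frobenius conjugate $\gamma^{q^{i}}$ ($0\le i\le t-1$) also lies in $\mathbb{F}_{q^{t}}$. Because $[\mathbb{F}_{q^{t}}:\mathbb{F}_{q}]=t$, it suffices to prove these $t$ conjugates are linearly independent over $\mathbb{F}_{q}$, and the whole strategy is to pull such a dependence back to a dependence among the conjugates of $\alpha$.

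To set up the argument, I would first write out
$$\gamma^{q^{i}}=\Bigl(\mathrm{Tr}_{q^{vt}|q^{t}}(\alpha)\Bigr)^{q^{i}}=\sum_{j=0}^{v-1}\alpha^{q^{i+jt}},$$
using that the $q$-power Frobenius commutes with the trace. Assume $\sum_{i=0}^{t-1}c_{i}\gamma^{q^{i}}=0$ with $c_{i}\in\mathbb{F}_{q}$; substituting and swapping sums yields
$$\sum_{i=0}^{t-1}\sum_{j=0}^{v-1}c_{i}\,\alpha^{q^{i+jt}}=0.$$
The crucial combinatorial step is the Euclidean-division bijection $(i,j)\mapsto i+jt$ from $\{0,\ldots,t-1\}\times\{0,\ldots,v-1\}$ onto $\{0,1,\ldots,vt-1\}$. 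Under this bijection the displayed relation becomes $\sum_{k=0}^{vt-1}d_{k}\alpha^{q^{k}}=0$ with $d_{k}=c_{k\bmod t}$. Since $\alpha$ is a normal element of $\mathbb{F}_{q^{vt}}$ over $\mathbb{F}_{q}$, the family $\{\alpha^{q^{k}}\}_{k=0}^{vt-1}$ is $\mathbb{F}_{q}$-linearly independent, forcing all $d_{k}=0$, and hence $c_{0}=c_{1}=\cdots=c_{t-1}=0$.

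I do not expect a genuine obstacle here. The only items to be careful about are (i) that the Frobenius commutes with the relative trace, so that the expansion of $\gamma^{q^{i}}$ is valid, and (ii) that the index map $(i,j)\mapsto i+jt$ is indeed a bijection onto $\{0,\ldots,vt-1\}$ so that the exponents $q^{i+jt}$ are pairwise distinct and exhaust a full Frobenius orbit. Once those routine checks are in place, the conclusion drops out by comparing with the dimension $t$ of $\mathbb{F}_{q^{t}}/\mathbb{F}_{q}$.
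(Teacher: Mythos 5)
Your argument is correct and complete: the expansion $\gamma^{q^i}=\sum_{j=0}^{v-1}\alpha^{q^{i+jt}}$, the bijectivity of $(i,j)\mapsto i+jt$ onto $\{0,\ldots,vt-1\}$, and the resulting transfer of a dependence among the $\gamma^{q^i}$ to one among the $\alpha^{q^k}$ are all sound, and the dimension count finishes the job. Note that the paper itself offers no proof here --- the statement is quoted from Perlis's 1942 article as a known result --- so there is nothing to compare against; your direct verification is the standard argument and would serve as a self-contained justification.
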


\begin{proposition}
Let $n=2^sm$, $2^s\geq 4$, $m$ be odd. If symmetric vector
$(a_0,a_1,\cdots,a_{n-1})\in\mathbb{F}_2^{n}$ corresponds to a
normal element of $\mathbb{F}_{2^{n}}$ over $\mathbb{F}_2$, then

1) $\sum\limits_{i=0}\limits^{m-1}a_{i2^s}=1$,
$\sum\limits_{i=0}\limits^{m-1}a_{i2^s+2^{s-1}}=0$,

2) $\sum\limits_{1\leq k\leq 2^{s-1}-1,
(k,2)=1}\sum\limits_{i=0}\limits^{m-1}a_{i2^s+k}=1$,

3) $(\sum\limits_{0\leq k\leq m-1
}(\sum\limits_{i=0}\limits^{2^s-1}a_{im+k})x^k, x^m-1)=1$.
\end{proposition}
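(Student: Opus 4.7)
The plan is to reduce the proposition to the two cases that have already been settled (Theorem \ref{2powercase} for $n=2^s\geq 4$ and Theorem \ref{oddcase} for odd $n$) by restricting the normal element $\alpha$ to the intermediate subfields $\mathbb{F}_{2^m}$ and $\mathbb{F}_{2^{2^s}}$. By Theorem \ref{perlis} applied to $(t,v)=(m,2^s)$ and $(t,v)=(2^s,m)$ respectively, the two elements
$$\gamma=\mbox{Tr}_{2^n|2^m}(\alpha)=\sum_{j=0}^{2^s-1}\alpha^{2^{jm}}\quad\mbox{and}\quad\delta=\mbox{Tr}_{2^n|2^{2^s}}(\alpha)=\sum_{j=0}^{m-1}\alpha^{2^{j\cdot 2^s}}$$
are normal in $\mathbb{F}_{2^m}$ and $\mathbb{F}_{2^{2^s}}$ over $\mathbb{F}_2$, respectively.

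Next I would compute the corresponding vectors of $\gamma$ and $\delta$ in terms of the given $a_i$. Using transitivity of trace and the fact that $\gamma^{2^k}\in\mathbb{F}_{2^m}$, a short manipulation gives
$$c_k:=\mbox{Tr}_{2^m|2}(\gamma\cdot\gamma^{2^k})=\mbox{Tr}_{2^m|2}\bigl(\mbox{Tr}_{2^n|2^m}(\alpha\gamma^{2^k})\bigr)=\mbox{Tr}_{2^n|2}(\alpha\gamma^{2^k})=\sum_{j=0}^{2^s-1}a_{jm+k},$$
for $0\leq k\leq m-1$, and by the identical argument
$$d_i:=\mbox{Tr}_{2^{2^s}|2}(\delta\cdot\delta^{2^i})=\sum_{j=0}^{m-1}a_{j\cdot 2^s+i},\qquad 0\leq i\leq 2^s-1.$$
Since $0\leq jm+k\leq n-1$ and $0\leq j\cdot 2^s+i\leq n-1$, no reduction modulo $n$ is required, so these are literal formulas in the given coordinates $a_\ell$.

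Finally I would read off the three conditions. By Theorem \ref{norm1} applied to the normal element $\gamma\in\mathbb{F}_{2^m}$, the polynomial $\sum_k c_k x^k$ must be coprime to $x^m-1$, which is exactly condition~3). By Theorem \ref{2powercase} applied to the normal element $\delta\in\mathbb{F}_{2^{2^s}}$ (valid since $2^s\geq 4$), we obtain $d_0=1$, $d_{2^{s-1}}=0$, and $\sum_{1\leq i\leq 2^{s-1}-1,(i,2)=1}d_i=1$; substituting the closed form for $d_i$ gives conditions~1) and~2). The remaining symmetry hypotheses required by Theorems \ref{norm1}, \ref{2powercase}, \ref{oddcase} are automatic: the substitution $j\mapsto 2^s-1-j$ together with $a_\ell=a_{n-\ell}$ forces $d_i=d_{2^s-i}$, and similarly $j\mapsto m-1-j$ forces $c_k=c_{m-k}$. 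There is no substantive obstacle here; the whole argument is a transitivity-of-trace computation followed by two bookkeeping verifications that the induced vectors lie in the sets already characterized.
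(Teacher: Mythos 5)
Your proposal is correct and follows essentially the same route as the paper: restrict $\alpha$ to the two intermediate subfields via Theorem \ref{perlis}, compute the induced corresponding vectors by transitivity of the trace, and invoke Theorem \ref{2powercase} (resp.\ Theorem \ref{oddcase}) to read off conditions 1)--2) (resp.\ 3)). The only blemish is a harmless index swap in your symmetry remark (the substitution $j\mapsto m-1-j$ is the one giving $d_i=d_{2^s-i}$, and $j\mapsto 2^s-1-j$ the one giving $c_k=c_{m-k}$); otherwise the argument matches the paper's, and your formula for $c_k$ is in fact cleaner than the paper's (which contains a typo at the corresponding point).
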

\begin{proof}
Let $\alpha$ be the normal element of $\mathbb{F}_{2^{n}}$ over
$\mathbb{F}_2$ with corresponding vector
$(a_0,a_1,\cdots,a_{n-1})$. By Theorem \ref{perlis},
$\gamma=\mbox{Tr}_{2^n|2^{2^s}}(\alpha)$ is a normal element of
$\mathbb{F}_{2^{2^s}}$ over $\mathbb{F}_2$. Now we consider the
corresponding vector $(r_0,r_1,\cdots,r_{2^s-1})$ of $\gamma$.

For every $0\leq k\leq 2^s-1$,
\begin{eqnarray*}
  r_k&=&\mbox{Tr}_{2^{2^s}|2}(\gamma^{1+2^k})=\mbox{Tr}_{2^{2^s}|2}((\sum\limits_{i=0}\limits^{m-1}\alpha^{2^{i2^s}})^{1+2^k})
    = \mbox{Tr}_{2^{2^s}|2}(\sum\limits_{i=0}\limits^{m-1}\sum\limits_{j=0}\limits^{m-1}(\alpha\alpha^{2^{j2^s+k}})^{2^{i2^s}}) \\
  &=& \sum\limits_{0\leq l\leq
  2^s-1}(\sum\limits_{i=0}\limits^{m-1}\sum\limits_{j=0}\limits^{m-1}(\alpha\alpha^{2^{j2^s+k}})^{2^{i2^s}})^{2^l}
%   &=& \sum\limits_{0\leq j\leq
%  m-1}(\sum\limits_{i=0}\limits^{m-1}\sum\limits_{l=0}\limits^{2^s-1}(\alpha\alpha^{2^{j2^s+k}})^{2^{i2^s+l}})\\
     = \sum\limits_{0\leq j\leq
  m-1}(\sum\limits_{v:=i2^s+l,v=0}\limits^{v=n-1}(\alpha\alpha^{2^{j2^s+k}})^{2^{v}})\\
   &=& \sum\limits_{0\leq j\leq m-1}\mbox{Tr}_{2^n|2}(\alpha\alpha^{2^{j2^s+k}})= \sum\limits_{0\leq j\leq m-1}a_{j2^s+k}.
\end{eqnarray*}

It can be proved that $(r_0,r_1,\cdots,r_{2^s-1})$ is symmetric
since $(a_0,a_1,\cdots,a_{2^s-1})$ is symmetric. Hence by
Theorem \ref{2powercase}, the necessary conditions for
$(r_0,r_1,\cdots,r_{2^s-1})$ to be a vector corresponding to a
normal element of $\mathbb{F}_{2^{2^s}}$ over $\mathbb{F}_2$ are:
$$\sum\limits_{i=0}\limits^{m-1}a_{i2^s}=1,\ \sum\limits_{i=0}\limits^{m-1}a_{i2^s+2^{s-1}}=0,\ \mbox{and}\
\sum\limits_{1\leq k\leq 2^{s-1}-1,
(k,2)=1}\sum\limits_{i=0}\limits^{m-1}a_{i2^s+k}=1.$$

Similarly, by Theorem \ref{perlis} $(t_0,t_1,\cdots,t_{m-1})$ where
$t_k=\sum\limits_{0\leq j\leq m-1}a_{j2^s+k}$ for $0\leq k\leq
m-1$ can be showed to be the corresponding vector of the normal
element $\beta=\mbox{Tr}_{2^{n}|2^m}(\alpha)$ of
$\mathbb{F}_{2^m}$ over $\mathbb{F}_2$. So from Theorem \ref{oddcase},
we have $(\sum\limits_{0\leq k\leq m-1
}(\sum\limits_{i=0}\limits^{2^s-1}a_{im+k})x^k, x^m-1)=1$.
\end{proof}

\begin{theorem}\label{pincin}\cite{pincin}
Let $n=tv$ with $t$ and $v$ relatively prime. Then for $\alpha\in
\mathbb{F}_{q^v}$ and $\beta\in\mathbb{F}_{q^t}$, the element
$\gamma=\alpha\beta$ is a normal element of $\mathbb{F}_{q^{vt}}$
over $\mathbb{F}_q$, if and only if $\alpha$ is a normal element of
$\mathbb{F}_{q^{v}}$ over $\mathbb{F}_q$ and $\beta$ is a normal
element of $\mathbb{F}_{q^{t}}$ over $\mathbb{F}_q$.
\end{theorem}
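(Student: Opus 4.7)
The plan is to reduce the normal basis property to linear independence of a set of $n$ elements, then exploit the coprimality of $t$ and $v$ in two ways: once via the Chinese Remainder Theorem on the exponents of Frobenius, and once via linear disjointness of $\mathbb{F}_{q^v}$ and $\mathbb{F}_{q^t}$ over $\mathbb{F}_q$.

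First I would note that $\gamma = \alpha\beta$ is a normal element of $\mathbb{F}_{q^{vt}}$ over $\mathbb{F}_q$ if and only if the $n = vt$ conjugates $\{\gamma^{q^i} : 0 \le i \le n-1\}$ are $\mathbb{F}_q$-linearly independent (since then the count forces them to span). Because $\alpha \in \mathbb{F}_{q^v}$ and $\beta \in \mathbb{F}_{q^t}$, Frobenius acts as $\alpha^{q^i} = \alpha^{q^{i \bmod v}}$ and $\beta^{q^i} = \beta^{q^{i \bmod t}}$, so
$$
\gamma^{q^i} = \alpha^{q^{i \bmod v}} \cdot \beta^{q^{i \bmod t}}.
$$
Since $\gcd(v,t)=1$, the Chinese Remainder Theorem makes the map $i \mapsto (i \bmod v,\, i \bmod t)$ a bijection between $\{0,1,\dots,n-1\}$ and $\{0,\dots,v-1\} \times \{0,\dots,t-1\}$. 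Therefore
$$
\{\gamma^{q^i} : 0 \le i \le n-1\} \;=\; \{\alpha^{q^j}\beta^{q^k} : 0 \le j \le v-1,\; 0 \le k \le t-1\}.
$$

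Next I would invoke linear disjointness: because $\gcd(v,t)=1$, we have $\mathbb{F}_{q^v} \cap \mathbb{F}_{q^t} = \mathbb{F}_{q^{\gcd(v,t)}} = \mathbb{F}_q$, and both fields are Galois over $\mathbb{F}_q$, so $\mathbb{F}_{q^v}$ and $\mathbb{F}_{q^t}$ are linearly disjoint over $\mathbb{F}_q$ inside $\mathbb{F}_{q^{vt}}$. Consequently, if $\{u_j\}_{j=0}^{v-1}$ is any $\mathbb{F}_q$-basis of $\mathbb{F}_{q^v}$ and $\{w_k\}_{k=0}^{t-1}$ is any $\mathbb{F}_q$-basis of $\mathbb{F}_{q^t}$, then the $vt$ products $\{u_j w_k\}$ form an $\mathbb{F}_q$-basis of $\mathbb{F}_{q^{vt}}$; and conversely, if the products $\{u_j w_k\}$ are $\mathbb{F}_q$-linearly independent then each of the two factor families must also be linearly independent (by a standard tensor-product argument, grouping terms by one index).

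Combining these two observations: $\{\gamma^{q^i}\}_{i=0}^{n-1}$ is a basis of $\mathbb{F}_{q^{vt}}/\mathbb{F}_q$ if and only if $\{\alpha^{q^j}\}_{j=0}^{v-1}$ is a basis of $\mathbb{F}_{q^v}/\mathbb{F}_q$ and $\{\beta^{q^k}\}_{k=0}^{t-1}$ is a basis of $\mathbb{F}_{q^t}/\mathbb{F}_q$, which is exactly the claim. The main obstacle, and the only substantive point beyond bookkeeping, will be justifying the linear disjointness cleanly enough to yield the \emph{if and only if} — in particular, spelling out why linear independence of $\{\alpha^{q^j}\beta^{q^k}\}$ in $\mathbb{F}_{q^{vt}}$ forces linear independence of each individual orbit; this uses precisely that the natural map $\mathbb{F}_{q^v} \otimes_{\mathbb{F}_q} \mathbb{F}_{q^t} \to \mathbb{F}_{q^{vt}}$ is an isomorphism under $\gcd(v,t)=1$.
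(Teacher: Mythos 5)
Your proposal is correct, but note that the paper does not prove this statement at all: Theorem 5.3 is quoted from the reference [Pincin, \emph{Comm.\ Algebra} 17 (1989)] as a known result, so there is no in-paper argument to compare against. On its own merits your argument is sound: the identity $\gamma^{q^i}=\alpha^{q^{i\bmod v}}\beta^{q^{i\bmod t}}$ together with the CRT bijection $i\mapsto(i\bmod v,\,i\bmod t)$ correctly identifies the Frobenius orbit of $\gamma$ with the family of products $\alpha^{q^j}\beta^{q^k}$, and the forward implication (both factors normal $\Rightarrow$ $\gamma$ normal) does follow from linear disjointness, or more elementarily from the dimension count $[\mathbb{F}_{q^{vt}}:\mathbb{F}_q]=vt$ plus the fact that the $vt$ products always span the compositum. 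One small correction of emphasis: the direction you flag as the ``main obstacle'' --- that independence of the $vt$ products forces independence of each factor family --- is actually the easy half and needs no tensor-product or linear-disjointness input: if $\sum_j c_j\alpha^{q^j}=0$ nontrivially with $c_j\in\mathbb{F}_q$, then multiplying by $\beta$ (which is nonzero, since $\gamma=\alpha\beta\neq 0$ for a normal $\gamma$) gives $\sum_j c_j\,\alpha^{q^j}\beta^{q^0}=0$, a nontrivial dependence among the products; symmetrically for $\beta$. You should also record explicitly that $\alpha\neq 0$ and $\beta\neq 0$ whenever $\gamma$ is normal, so that this multiplication step is legitimate. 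With that adjustment the proof is complete.
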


By the above theorem, we have
\begin{proposition}\label{generalconst}
Suppose symmetric vector
$(a_0,a_1,\cdots,a_{2^s-1})\in\mathbb{F}_2^{2^s}$ satisfies
$a_0=1$, $a_{n/2}=0$ and $\sum\limits_{0\leq i\leq
2^{s-1}-1,(i,2)=1}a_i=1$, symmetric vector
$(b_0,b_1,\cdots,b_{m-1})\in\mathbb{F}_2^{m}$ satisfies
$(\sum\limits_{0\leq i\leq m-1}b_ix^i, x^m-1)=1$, where $m$ is odd
and $2^s\geq 4$. Then there exists a normal element of
$\mathbb{F}_{2^{2^sm}}$ over $\mathbb{F}_2$ with corresponding
vector $(c_0,c_1,\cdots,c_{2^sm-1})\in\mathbb{F}_2^{2^sm}$ such
that $c_k=a_{k(\mbox{\textup{mod}}\ 2^s)}b_{k(\mbox{\textup{mod}}\
m)}$ for $0\leq k\leq 2^sm-1$.
\end{proposition}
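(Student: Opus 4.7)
The plan is to combine the constructions from the two previous cases (odd $n$ and $n = 2^s$) via the classical tensor-product construction of normal elements given in Theorem \ref{pincin}. Since the hypotheses on $(a_0,\ldots,a_{2^s-1})$ match exactly the conditions of Theorem \ref{2powercase}, I obtain a normal element $\alpha\in\mathbb{F}_{2^{2^s}}$ of $\mathbb{F}_{2^{2^s}}$ over $\mathbb{F}_2$ with corresponding vector $(a_0,\ldots,a_{2^s-1})$. Similarly, Theorem \ref{oddcase} applied to $(b_0,\ldots,b_{m-1})$ yields a normal element $\beta\in\mathbb{F}_{2^m}$ of $\mathbb{F}_{2^m}$ over $\mathbb{F}_2$ with corresponding vector $(b_0,\ldots,b_{m-1})$. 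Because $\gcd(2^s,m)=1$, Theorem \ref{pincin} then gives that $\gamma=\alpha\beta$ is a normal element of $\mathbb{F}_{2^{2^s m}}$ over $\mathbb{F}_2$.

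It remains to verify that the corresponding vector of $\gamma$ is exactly $(c_0,\ldots,c_{2^sm-1})$ with $c_k=a_{k\bmod 2^s}\,b_{k\bmod m}$. The key computation relies on the standard product-trace identity: for $u\in\mathbb{F}_{2^{2^s}}$ and $w\in\mathbb{F}_{2^m}$ with $\gcd(2^s,m)=1$,
$$\mbox{Tr}_{2^{2^s m}|2}(uw)=\mbox{Tr}_{2^{2^s}|2}(u)\cdot\mbox{Tr}_{2^m|2}(w).$$
This follows from the Chinese Remainder Theorem: expanding the left-hand side as $\sum_{i=0}^{2^sm-1}u^{2^i}w^{2^i}$, noting that $u^{2^i}$ depends only on $i\bmod 2^s$ and $w^{2^i}$ only on $i\bmod m$, the sum factorizes as the pair $(i\bmod 2^s,i\bmod m)$ ranges bijectively over $\{0,\ldots,2^s-1\}\times\{0,\ldots,m-1\}$.

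With this identity in hand, I use that $\alpha\in\mathbb{F}_{2^{2^s}}$ implies $\alpha^{2^k}=\alpha^{2^{k\bmod 2^s}}$ and likewise $\beta^{2^k}=\beta^{2^{k\bmod m}}$, so
$$c_k=\mbox{Tr}_{2^{2^sm}|2}(\gamma^{1+2^k})=\mbox{Tr}_{2^{2^sm}|2}\!\left(\alpha^{1+2^{k\bmod 2^s}}\beta^{1+2^{k\bmod m}}\right)=\mbox{Tr}_{2^{2^s}|2}(\alpha^{1+2^{k\bmod 2^s}})\cdot\mbox{Tr}_{2^m|2}(\beta^{1+2^{k\bmod m}})=a_{k\bmod 2^s}\,b_{k\bmod m},$$
as required.

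The argument is short and there is no real obstacle; the only point that deserves care is the product-trace factorization, since this is the bridge that converts the separately constructed normal elements in the coprime subfields into the prescribed trace-self-orthogonal relations in the compositum. Once that CRT-based identity is established, the rest of the proof is a direct bookkeeping of the exponent reductions $k\mapsto (k\bmod 2^s,k\bmod m)$.
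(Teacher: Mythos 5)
Your proposal is correct and follows essentially the same route as the paper: construct $\alpha$ and $\beta$ with the prescribed vectors from the $2$-power and odd cases, apply Theorem \ref{pincin} to get that $\gamma=\alpha\beta$ is normal, and compute $\mbox{Tr}_{2^{2^sm}|2}(\gamma^{1+2^k})$ via the factorization of the trace over the coprime subfields. The only difference is that you spell out the CRT justification of the product-trace identity, which the paper uses implicitly.
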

\begin{proof}
By the above assumptions and Algorithm
\ref{oddalg}, we can construct a normal element $\alpha$ of
$\mathbb{F}_{2^{2^s}}$ over $\mathbb{F}_2$ with corresponding
vector $(a_0,\cdots,a_{2^s-1})$ and a normal element $\beta$ of
$\mathbb{F}_{2^{m}}$ over $\mathbb{F}_2$ with corresponding vector
$(b_0,\cdots,b_{m-1})$. From Theorem \ref{pincin},
$\gamma=\alpha\beta$ is a normal element of
$\mathbb{F}_{2^{2^sm}}$ over $\mathbb{F}_2$.

For $0\leq k\leq 2^sm-1$, $\alpha^{2^k}=\alpha^{2^{k(\mbox{mod}\
2^s)}}$ and $\beta^{2^k}=\beta^{2^{k(\mbox{mod}\ m)}}$, thus
\begin{eqnarray*}
  \mbox{Tr}_{2^{2^sm}|2}(\gamma^{1+2^k}) &=& \mbox{Tr}_{2^{2^sm}|2}(\alpha\beta\alpha^{2^k}\beta^{2^k}) \\
   &=& \mbox{Tr}_{2^{2^sm}|2}(\alpha\beta\alpha^{2^{k(\mbox{mod}\ 2^s)}}\beta^{2^{k(\mbox{mod}\ m)}}) \\
   &=& \mbox{Tr}_{2^{2^s}|2}(\alpha \alpha^{2^{k(\mbox{mod}\ 2^s)}}) \mbox{Tr}_{2^m|2}(\beta\beta^{2^{k(\mbox{mod}\ m)}}) \\
   &=& a_{k(\mbox{mod}\ 2^s)}b_{k(\mbox{mod}\ m)}\\
   &=& c_k.
\end{eqnarray*}
So the corresponding vector of normal element $\gamma$ is
$(c_0,c_1,\cdots,c_{2^sm-1})$.
\end{proof}

Let $n=2^sm$ with $2^s\geq 4$ and $m$ odd,
$(a_0,a_1,\cdots,a_{2^s-1})\in\mathbb{F}_2^{2^s}$ be
\begin{equation*}
a_l=\left\{
\begin{array}{ll}
&1,\qquad l=0, i_0, 2^s-i_0, \\
&0,\qquad Otherwise,
\end{array}\right.
\end{equation*}
where $2\nmid i_0, i_0\in [1,2^s-1]$, and
$(b_0,b_1,\cdots,b_{m-1})=(1,0,\cdots,0)\in\mathbb{F}_2^{m}$.
%\begin{equation*}
%b_l=\left\{
%\begin{array}{ll}
%&1,\qquad l=0, \\
%&0,\qquad Otherwise,
%\end{array}\right.
%\end{equation*}
Then the vector
$(c_0,c_1,\cdots,c_{2^sm-1})\in\mathbb{F}_2^{2^sm}$ defined by
$c_k=a_{k(\mbox{mod}\ 2^s)}b_{k(\mbox{mod}\ m)}$ for $0\leq k\leq
2^sm-1$ is
\begin{equation*}
c_k=\left\{
\begin{array}{ll}
&1,\qquad k=0, j_0m, n-j_0m, \\
&0,\qquad Otherwise,
\end{array}\right.
\end{equation*}
where $j_0$ is the unique solution of the congruence equation
$mx\equiv i_0(\mbox{mod}\ 2^s)$. By Corollary \ref{generalconst},
there exists a normal element of $\mathbb{F}_{2^{n}}$ over
$\mathbb{F}_2$ with its corresponding vector
$(c_0,\cdots,c_{n-1})$, the Hamming weight of which is $3$.

\begin{corollary}
For every $n$ with 4 dividing $n$, there is a normal element $\alpha$ of
$\mathbb{F}_{2^{n}}$ over $\mathbb{F}_2$ such that the Hamming
weight of its corresponding vector
$$(\textup{\mbox{Tr}}_{2^n|2}(\alpha),
\textup{\mbox{Tr}}_{2^n|2}(\alpha^{1+2}),
\textup{\mbox{Tr}}_{2^n|2}(\alpha^{1+4}), \cdots,
\textup{\mbox{Tr}}_{2^n|2}(\alpha^{1+2^{n-1}}))$$ is $3$.
\end{corollary}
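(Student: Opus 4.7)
The plan is to invoke Proposition \ref{generalconst} (the multiplicative construction) applied to the explicit pair of vectors described in the paragraph immediately preceding the corollary, so that no new theoretical input is needed; the work lies entirely in selecting the ingredients and checking the hypotheses of Theorems \ref{2powercase} and \ref{oddcase}.

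First I would factor $n = 2^s m$ with $s \geq 2$ and $m$ odd, which is possible because $4 \mid n$. On the $2$-power side, I would define $(a_0, a_1, \ldots, a_{2^s-1}) \in \mathbb{F}_2^{2^s}$ to be supported on $\{0, i_0, 2^s - i_0\}$ for some odd $i_0 \in [1, 2^{s-1}-1]$ (e.g.\ $i_0 = 1$, noting that $s \geq 2$). This vector is symmetric, has $a_0 = 1$, has $a_{2^{s-1}} = 0$ because $2^{s-1}$ is even while $i_0, 2^s - i_0$ are odd, and satisfies $\sum_{1 \le i \le 2^{s-1}-1,\ (i,2)=1} a_i = a_{i_0} = 1$. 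Hence by Theorem \ref{2powercase} it is realized by a normal element of $\mathbb{F}_{2^{2^s}}$ over $\mathbb{F}_2$. On the odd side, I would take $(b_0, \ldots, b_{m-1}) = (1, 0, \ldots, 0)$; then $\sum_i b_i x^i = 1$, trivially coprime to $x^m - 1$, so by Theorem \ref{oddcase} this vector is realized by a normal element of $\mathbb{F}_{2^m}$ over $\mathbb{F}_2$.

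With both hypotheses verified, Proposition \ref{generalconst} produces a normal element of $\mathbb{F}_{2^n}$ over $\mathbb{F}_2$ whose corresponding vector is
\[
c_k = a_{k \bmod 2^s}\, b_{k \bmod m}, \qquad 0 \le k \le n-1.
\]
To finish, I would count the support of $c$: $c_k = 1$ requires $k \equiv 0 \pmod{m}$ and $k \bmod 2^s \in \{0,\, i_0,\, 2^s - i_0\}$. Since $\gcd(m, 2^s) = 1$, the Chinese Remainder Theorem yields exactly three solutions in $\{0, 1, \ldots, n-1\}$, namely $k = 0$, $k = j_0 m$ with $j_0$ the unique solution of $mx \equiv i_0 \pmod{2^s}$, and $k = n - j_0 m$. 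These indices are distinct (their residues mod $2^s$ are already distinct), so the Hamming weight of the corresponding vector is exactly $3$.

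There is no genuine obstacle in this argument, since the construction and the weight computation are essentially laid out in the paragraph preceding the corollary; the only point requiring a moment of care is the existence and distinctness of the three indices produced by CRT, and the parity check $a_{2^{s-1}} = 0$, both of which use only $s \ge 2$ and $\gcd(m, 2^s) = 1$.
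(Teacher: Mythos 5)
Your proposal is correct and follows exactly the paper's own route: the paragraph preceding the corollary makes the same choice of a weight-$3$ symmetric vector supported on $\{0, i_0, 2^s-i_0\}$ with $i_0$ odd on the $2$-power side and the weight-$1$ vector $(1,0,\dots,0)$ on the odd side, then invokes Proposition \ref{generalconst} and the CRT count of the support of $c_k=a_{k \bmod 2^s}b_{k \bmod m}$. Your explicit verification of the hypotheses of Theorems \ref{2powercase} and \ref{oddcase} (in particular $a_{2^{s-1}}=0$ by parity) is a welcome addition the paper leaves implicit, but the argument is the same.
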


\end{document}